\documentclass[12pt]{amsart}
\NeedsTeXFormat{LaTeX2e}[1995/12/01]
\RequirePackage{mathtools}
\RequirePackage{amsopn}
\RequirePackage{amsfonts}
\RequirePackage{paralist}
\RequirePackage{amssymb}
\RequirePackage{amsthm}

\usepackage[alphabetic]{amsrefs}

\renewcommand\MR[1]{\relax} 
\usepackage{tikz}
\usetikzlibrary{cd,decorations.pathmorphing}
\usepackage{mathrsfs}
\usepackage{d-bends}
\usepackage[useregional,english]{datetime2}
\usepackage[T1]{fontenc}

%
%
\newtheorem{thm}{Theorem}[section]
\numberwithin{equation}{section}

\newtheorem{lemma}[thm]{Lemma}
\newtheorem{prop}[thm]{Proposition}

\theoremstyle{definition}
\newtheorem{definition}[thm]{Definition}

\theoremstyle{remark}
\newtheorem{remark}[thm]{Remark}
\newtheorem{example}[thm]{Example}

\newtheorem{notation}[thm]{Notation}

\newtheorem{mycomment}[thm]{Comment}
{\end{mycomment}\endgroup}
\hyphenation{pre-print}

\def\mathcs{C^{*}}
\newcommand{\cs}{\ensuremath{\mathcs}}

\DeclareMathSymbol{\rtimes}{\mathbin}{AMSb}{"6F}

\def\ibind#1{\mathop{#1\mathord{\mathop{\text{--}}}}\!\Ind\nolimits}
\newcommand{\gpdtwst}[2]{\Sigma({#1}, {#2})}

\def\T{\mathbf{T}}
\def\Z{\mathbf{Z}}

\DeclareMathOperator{\Ind}{Ind}

\DeclareMathOperator{\id}{id}
\def\set#1{\{\,#1\,\}}
\newcommand\sset[1]{\{#1\}}
\let\tensor=\otimes

%
\makeatletter
\def\labelenumi{\textnormal{(\@alph\c@enumi)}}
\def\theenumi{\@alph \c@enumi}
\def\labelenumii{\textnormal{(\@roman\c@enumii)}}
\def\theenumii{\@roman \c@enumii}
\newcount\charno
\def\alphapart#1{\charno=96
\advance\charno by#1\char\charno}

\makeatother

%
\def\<{\langle}
\def\>{\rangle}
\let\ipscriptstyle=\scriptscriptstyle
\def\lipsqueeze{{\mskip -3.0mu}}
\def\ripsqueeze{{\mskip -3.0mu}}
\def\ipcomma{\nobreak\mathrel{,}\nobreak}
\newbox\ipstrutbox
\setbox\ipstrutbox=\hbox{\vrule height8.5pt
width 0pt}
\def\ipstrut{\copy\ipstrutbox}
\def\lip#1<#2,#3>{\mathopen{\relax_{\ipstrut\ipscriptstyle{
#1}}\lipsqueeze
\langle} #2\ipcomma #3 \rangle}
\def\blip#1<#2,#3>{\mathopen{\relax_{\ipstrut
\ipscriptstyle{ #1}}\lipsqueeze\bigl\langle} #2\ipcomma #3 \bigr\rangle}
\def\rip#1<#2,#3>{\langle #2\ipcomma #3
\rangle_{\ripsqueeze\ipstrut\ipscriptstyle{#1}}}
\def\brip#1<#2,#3>{\bigl\langle #2\ipcomma #3
\bigr\rangle_{\ripsqueeze\ipstrut\ipscriptstyle{#1}}}
\def\angsqueeze{\mskip -6mu}
\def\smangsqueeze{\mskip -3.7mu}
\def\trip#1<#2,#3>{\langle\smangsqueeze\langle #2\ipcomma #3
\rangle\smangsqueeze\rangle_{\ripsqueeze\ipstrut\ipscriptstyle{#1}}}
\def\btrip#1<#2,#3>{\bigl\langle\angsqueeze\bigl\langle #2\ipcomma
#3
\bigr\rangle
\angsqueeze\bigr\rangle_{\ripsqueeze\ipstrut\ipscriptstyle{#1}}}
\def\tlip#1<#2,#3>{\mathopen{\relax_{\ipstrut\ipscriptstyle{
#1}}\lipsqueeze \langle\smangsqueeze\langle} #2\ipcomma #3
\rangle\smangsqueeze\rangle}
\def\btlip#1<#2,#3>{\mathopen{\relax_{\ipstrut\ipscriptstyle{
#1}}\lipsqueeze
\bigl\langle\angsqueeze\bigl\langle} #2\ipcomma #3
\bigr\rangle\angsqueeze\bigr\rangle}

\def\ip(#1|#2){(#1\mid #2)}
\def\bip(#1|#2){\bigl(#1 \mid #2\bigr)}
\def\Bip(#1|#2){\Bigl( #1 \bigm| #2 \Bigr)}
%
\expandafter\ifx\csname BibSpec\endcsname\relax\else
\BibSpec{collection.article}{%
    +{}  {\PrintAuthors}                {author}
    +{,} { \textit}                     {title}
    +{.} { }                            {part}
    +{:} { \textit}                     {subtitle}
    +{,} { \PrintContributions}         {contribution}
    +{,} { \PrintConference}            {conference}
    +{}  {\PrintBook}                   {book}
    +{,} { }                            {booktitle}
    +{,} { }                            {series}
    +{,} { \voltext}                    {volume}
    +{,} { }                            {publisher}
    +{,} { }                            {organization}
    +{,} { }                            {address}
    +{,} { \PrintDateB}                 {date}
    +{,} { pp.~}                        {pages}
    +{,} { }                            {status}
    +{,} { \PrintDOI}                   {doi}
    +{,} { available at \eprint}        {eprint}
    +{}  { \parenthesize}               {language}
    +{}  { \PrintTranslation}           {translation}
    +{;} { \PrintReprint}               {reprint}
    +{.} { }                            {note}
    +{.} {}                             {transition}
}
\BibSpec{article}{%
    +{}  {\PrintAuthors}                {author}
    +{,} { \textit}                     {title}
    +{.} { }                            {part}
    +{:} { \textit}                     {subtitle}
    +{,} { \PrintContributions}         {contribution}
    +{.} { \PrintPartials}              {partial}
    +{,} { }                            {journal}
    +{}  { \textbf}                     {volume}
    +{}  { \PrintDatePV}                {date}
    +{,} { \eprintpages}                {pages}
    +{,} { }                            {status}
    +{,} { \PrintDOI}                   {doi}
    +{,} { available at \eprint}        {eprint}
    +{}  { \parenthesize}               {language}
    +{}  { \PrintTranslation}           {translation}
    +{;} { \PrintReprint}               {reprint}
    +{.} { }                            {note}
    +{.} {}                             {transition}
}
\BibSpec{book}{%
    +{}  {\PrintPrimary}                {transition}
    +{,} { \textit}                     {title}
    +{.} { }                            {part}
    +{:} { \textit}                     {subtitle}
    +{,} { \PrintEdition}               {edition}
    +{}  { \PrintEditorsB}              {editor}
    +{,} { \PrintTranslatorsC}          {translator}
    +{,} { \PrintContributions}         {contribution}
    +{,} { }                            {series}
    +{,} { \voltext}                    {volume}
    +{,} { }                            {publisher}
    +{,} { }                            {organization}
    +{,} { }                            {address}
    +{,} { pp.~}                        {pages}
    +{,} { \PrintDateB}                 {date}
    +{,} { }                            {status}
    +{}  { \parenthesize}               {language}
    +{}  { \PrintTranslation}           {translation}
    +{;} { \PrintReprint}               {reprint}
    +{.} { }                            {note}
    +{.} {}                             {transition}
}
\fi
\setlength{\textwidth}{36pc}
\setlength{\oddsidemargin}{3.25in}
\addtolength{\oddsidemargin}{-18pc}
\evensidemargin=\oddsidemargin
%
%
\mathtoolsset{showonlyrefs} 
\newcommand\U{\mathscr{U}}
\newcommand\V{\mathscr{V}}
\newcommand\W{\mathscr{W}}
\newcommand\sheaffont{\mathscr}
\newcommand\uG{\sheaffont{G}}
\newcommand\uT{\sheaffont{S}}
\newcommand\gu{\Gamma_{\U}}
\newcommand\gv{\Gamma_{\V}}
\newcommand\cnu{\varphi_{\nu}}
\newcommand\cnuc{\varphi_{\nucp}}
\newcommand\nucp{\nu^{c}}

\newcommand\hG{\widehat{G}}

\newcommand\go{\Sigma^{(0)}}
\newcommand\gugo{\Sigma\backslash\go}

\newcommand\A{\mathcal A}

\newcommand\RR{\mathcal{R}}

\newcommand\I{\mathcal{I}}

\newcommand\grh{G}
\newcommand\shh{\mathscr{G}}
\newcommand\aaa{\mathfrak{a}}

\newcommand\indsu{\Ind}
\newcommand\Sp{\Sigma'}
\newcommand\atensor{\odot}
\newcommand\lambdap{\underline{\lambda}}
\newcommand\ssyp{\Sigma(Y')}
\usepackage[normalem]{ulem} 
\usepackage{color}
\definecolor{Dgreen}{cmyk}{0.93,0.33,0.92,0.25} 

\newcommand\dpwon{\bgroup \color{blue}}
\newcommand\dpwoff{\egroup}
\begin{document}
\title[The Dixmier--Douady Class]{The Dixmier--Douady Classes of Certain
  Groupoid \cs-Algebras with Continuous Trace}

\author[M. Ionescu]{Marius Ionescu}

\address{Department of Mathematics\\  United States Naval Academy\\
  Annapolis, MD 21402 USA}

\email{ionescu@usna.edu}

\author[A. Kumjian]{Alex Kumjian}

\address{Department of Mathematics \\ University of Nevada\\ Reno NV
  89557 USA}

\email{alex@unr.edu}

\author[A. Sims]{Aidan Sims}

\address{School of Mathematics and Applied Statistics\\ University of
  Wollongong\\ NSW 2522, Australia}

\email{asims@uow.edu.au}

\author[D. P. Williams]{Dana P. Williams}

\address{Department of Mathematics\\ Dartmouth College \\ Hanover, NH
03755-3551 USA}

\email{dana.williams@Dartmouth.edu}

\date{30 December 2017}

\thanks{This research was supported by the Australian Research
  Council, grant DP150101595. This work was also partially supported
  by Simons Foundation Collaboration grants \#209277 (MI), \#353626
  (AK) and \#507798 (DPW), and by a Junior NARC grant from the United
  States Naval Academy.}

\begin{abstract}
  Given a locally compact abelian group $G$, we give an explicit
  formula for the Dixmier--Douady invariant of the $C^*$-algebra of
  the groupoid extension associated to a \v{C}ech $2$-cocycle in the
  sheaf of germs of continuous $G$-valued functions. We then exploit
  the blow-up construction for groupoids to extend this to some more
  general central extensions of \'etale equivalence relations.
\end{abstract}

\maketitle
\section{Introduction}
\label{sec:introduction} This article provides explicit formulas for
the Dixmier--Douady invariants of a large class of continuous-trace
\cs-algebras arising from groupoid extensions. Continuous-trace
\cs-algebras are amongst the best understood and most intensively
studied classes of Type~I \cs-algebras. A \cs-algebra $A$ is a
continuous-trace \cs-algebra if it has Hausdorff spectrum and is
locally Morita equivalent to a commutative \cs-algebra. Alternatively,
$A$ is of continuous trace if it has Hausdorff spectrum and every
irreducible representation of $A$ admits a neighbourhood $U$ and an
element $a$ of $A$ such that the image of $a$ under any element of $U$
is a rank-one projection. The celebrated Dixmier--Douady theorem
\cite{dixdou:bsmf63} associates to each continuous-trace \cs-algebra
$A$ with spectrum $X$ an element $\delta(A)$ of $H^3(X, \Z)$ such that
$A$ is Morita equivalent to an abelian $C^*$-algebra if and only if
$\delta(A) = 0$. Indeed, the collection of Morita-equivalence classes
of continuous-trace \cs-algebras with spectrum $X$ forms a group under
a balanced tensor-product operation, and $A \mapsto \delta(A)$ induces
a group isomorphism of this group with $H^{3}(X, \Z)$---see
\cite{rw:morita}*{Theorem~6.3}.

As a result, there has been a great deal of work characterizing when
\cs-algebras associated to dynamical systems have continuous
trace. For example, \citelist{\cite{gre:pjm77}
  \cite{wil:jfa81}\cite{ech:tams93} \cite{muhwil:jot84}
  \cite{raewil:cjm93} \cite{raeros:tams88}} investigate when a
transformation group \cs-algebra is continuous-trace; the epic
\cite{ech:mams96} deals with general crossed product \cs-algebras; and
\citelist{\cite{muhwil:jams04} \cite{muhwil:plms395}
  \cite{muhwil:ms92} \cite{muhwil:ms90} \cite{mrw:tams96}} study when
groupoid \cs-algebras have continuous trace.

However, there are few results that provide tools for calculating the
Dixmier--Douady invariant of a given continuous-trace \cs-algebra; and
the results that do address this question are not entirely
satisfactory. For example, in the examples appearing in
\cite{gre:pjm77} and \cite{muhwil:ms90}, the Dixmier--Douady class is
always trivial. And the formula developed in \cite{raewil:cjm93} is
somewhat unwieldy. There are, on the other hand, some intriguing
formulas in \cite{raeros:tams88}, and an explicit computation in
\cite{ionkum16}*{Example~4.6} based on one of these.

There is a simple reason for the dearth of results that compute
$\delta(A)$: the Dixmier--Douady invariant is difficult to compute
except on an \emph{ad hoc} basis. In this note, we take some steps
towards addressing this lack of computable examples, by developing
usable formulas for the Dixmier--Douady class of significant classes
of groupoid \cs-algebras. Starting with a locally compact abelian
group $G$ and a space $X$, we first consider groupoids constructed
directly from a \v{C}ech cocycle on $X$ taking values in the sheaf of
germs of continuous $G$-valued functions on $X$. We then extend this
to central groupoid extensions of equivalence relations constructed
from local homeomorphisms between locally compact Hausdorff spaces.
Not surprisingly, our most elegant results are obtained under more
restrictive hypotheses; but even in the more general situation our
computation is fairly concrete. In any case, the subject of operator
algebras, and the study of \cs-algebras associated to dynamical
systems in particular, is short on concrete examples, so we think that
extra hypotheses are worth it.

The main results of the paper are as follows. Consider a
second-countable locally compact Hausdorff space $X$ and a
second-countable locally compact abelian group $G$. Take a \v{C}ech
$2$-cocycle $c$ on $X$, relative to a locally finite open cover
$\U = \{U_i : i \in I\}$ of $X$, taking values in the sheaf $\shh$ of
germs of continuous $G$-valued functions on $X$. The associated
Raeburn--Taylor groupoid $\gu$ consists of triples $(i, x, j)$ such
that $x \in U_{ij} \subseteq X$. The cocycle $c$ determines a natural
central extension $\Sigma_{c}$ of $\gu$ by $G$: as a set $\Sigma_{c}$
is just a copy of $\gu \times G$, but the composition in the second
coordinate is twisted by the cocycle $c$ as in
equation~\eqref{eq:Sigma-c mult}. Our first main result,
Theorem~\ref{thm-main-dd-calc}, says that $\cs(\Sigma_c)$ is a
continuous-trace algebra with spectrum $\hG\times X$, and computes its
Dixmier--Douady invariant as follows: write $\uT$ for the sheaf of
germs of continuous $\T$-valued functions on $\hG \times X$, and let
$\V$ be the cover $\set{\hG \times U_i}_{i \in I}$ of $\hG \times
X$. Then the cocycle $c$ determines a cocycle
$\nucp \in Z^2(\hG \times X, \uT)$ relative to $\V$ such that
$\nucp_{ijk}(\tau, x) = \overline{\tau\bigl(c_{ijk}(x)\bigr)}$ for all
$i,j,k \in I$, $x \in U_{ijk}$ and $\tau \in \hG$.  The assignment
$c \mapsto \nucp$ descends to a homomorphism
\begin{equation}
  m_{*}: H^{2}(X,\uG)\to H^{2}(\hG \times X, \uT),
\end{equation}
and Theorem~\ref{thm-main-dd-calc} shows that, under the canonical
isomorphism of $H^3(\hG \times X, \Z)$ with $H^2(\hG \times X, \uT)$,
the Dixmier--Douady class of $\cs(\Sigma_c)$ is carried to the
cohomology class $[\nucp]$. The set-up and proof of this theorem
occupy Sections
\ref{sec:nice-class-examples}--\ref{sec:dixmier-douady-class}.

We then build upon Theorem~\ref{thm-main-dd-calc} in
Section~\ref{sec:local-lomeomorphisms} to describe a method for
computing the Dixmier--Douady invariants of more general central
extensions. We start with a local homeomorphism $\psi : Y \to X$ of
second-countable locally compact Hausdorff spaces, and form the
equivalence relation $R(\psi)$ on $Y$ consisting of pairs with
identical image under $\psi$. We consider a central extension $\Sigma$
of $R(\psi)$ by $G \times Y$. We assume that $X$ is locally
$G$-trivial in the sense that every open cover of $X$ admits a
refinement such that, on double overlaps, every principle $G$-bundle
is trivial. This hypothesis ensures that a suitable blowup $\Sigma'$
of the extension $\Sigma$ admits a continuous section for the
surjection onto the corresponding blowup $R'$ of $R(\psi)$. It follows
that $\Sigma'$ is determined by a continuous $G$-valued groupoid
$2$-cocycle on $R'$. A little more work puts us back in the situation
of Theorem~\ref{thm-main-dd-calc}, and we can use this to compute the
Dixmier--Douady invariant of $\cs(\Sigma')$. The blowup operation
determines an equivalence of extensions, and hence a Morita
equivalence of their \cs-algebras, yielding a computation of
$\delta(\cs(\Sigma))$.

For the reader's convenience we include an appendix with background on
central extensions of groupoids by locally compact abelian groups and,
in particular, those that arise from continuous 2-cocycles (see
Appendix~\ref{sec:Extcoc}).

The second and fourth authors thank the third author for his
hospitality and support during trips to the University of Wollongong.

\section{Central Isotropy}
\label{sec:central-isotropy}

In the sequel, $\Sigma$ will always be a second-countable locally
compact Hausdorff groupoid with a Haar system
$\sset{\lambda^{u}}_{u\in\go}$.  The \emph{isotropy groupoid} of
$\Sigma$ is the closed subgroupoid
\begin{equation}
  \label{eq:1n}
  \I(\Sigma) = \set{\gamma\in \Sigma:s(\gamma)=r(\gamma)}.
\end{equation}
Note that $\I(\Sigma)$ is a group bundle over $\go$ and that
$\I(\Sigma)$ admits a Haar system if and only if the isotropy map
\begin{equation}
  \label{eq:2}
  u\mapsto \Sigma(u)=\set{\gamma\in \I(\Sigma) : s(\gamma)=u=r(\gamma)}
\end{equation}
is continuous from $\go$ into the locally compact Hausdorff space
$\mathcal{C}(\Sigma)$ of closed subgroups of $\Sigma$
\cite{ren:jot91}*{Lemma~1.3}. In the sequel we need to assume not only
that each $\Sigma(u)$ is abelian, but that the isotropy is central in
the following sense.

\begin{definition}
  Let $\Sigma$ be a groupoid, $\I(\Sigma)$ its isotropy subgroupoid,
  and $q:\go\to\gugo$ the quotient map.  We say that $\Sigma$ has
  \emph{central isotropy} if $\gugo$ is Hausdorff and there is an
  abelian group bundle $\A$ over $\gugo$ and a groupoid isomorphism
  $\iota:q^{*} \A \to \I(\Sigma)$ such that $\iota|_{\go}=\id$ and
  such that
  \begin{equation}
    \iota\bigl(r(\gamma),a\bigr) \gamma = \gamma\iota\bigl(s(\gamma),a
    \bigr)
    \quad\text{for all $a\in A([r(\gamma)])$.}
  \end{equation}
\end{definition}

The use of the word ``central'' is partially justified by the
following example.

\begin{example}\label{ex-central-haar}
  Let $(G,X)$ be a transformation group with $G\backslash X$ Hausdorff
  and such that each stability group $G_{x}=\set{h\in G:h\cdot x =x}$
  is central in $G$.  Let $\Sigma=G\times X$ be the corresponding
  transformation groupoid.  Note that $G_{h\cdot x}=G_{x}$ for all
  $h\in G$ and $x\in X$.  Then
  \begin{equation}
    \A=\set{(G\cdot x,h)\in G\backslash X\times G : h\in G_{x}}
  \end{equation}
  is a group bundle over $G\backslash X$.  If $q:X\to G \backslash X$
  is the orbit map, then $\Sigma$ has 
  central isotropy with respect to the isomorphism
  $\iota(x,G\cdot x, h)=(h,x)$.
\end{example}

An important class of examples comes from \emph{$\T$-groupoids} or
\emph{twists} as introduced by the second author
\citelist{\cite{kum:lnim85} \cite{kum:cjm86}}.  Recall that a
$\T$-groupoid $\Sigma$ over a groupoid $\RR$ is a
unit-space-preserving groupoid extension
\begin{equation}\label{eq:t-gr}
  \begin{tikzcd}[column sep=3cm]
    \go\times\T \arrow[r,"\iota", hook] \arrow[dr,shift left, bend
    right = 15] \arrow[dr,shift right, bend right = 15]&\Sigma
    \arrow[r,"\pi", two heads] \arrow[d,shift left] \arrow[d,shift
    right]&\RR \arrow[dl,shift left, bend left = 15] \arrow[dl,shift
    right, bend left = 15]
    \\
    &\go&
  \end{tikzcd}
\end{equation}
such that
\begin{equation}
  \label{eq:3n}
  \iota\bigl(r(\gamma),z\bigr)\gamma
  =\gamma\iota\bigl(s(\gamma),z\bigr)\quad\text{for all $\gamma\in \Sigma$
    and $z\in\T$.}
\end{equation}

Given a $\T$-groupoid $\Sigma$ over $\RR$, there is a free and proper
$\T$-action on $\Sigma$ such that
$z\cdot \gamma = \iota\bigl(s(\gamma),z\bigr)\gamma$. We can identify
$\RR$ with the orbit space $\T\backslash \Sigma$ and $\pi$ with the
orbit map---see \cite{muhwil:plms395}*{\S3}. See Appendix
\ref{sec:Extcoc} for more background on extensions of groupoids by
locally compact abelian groups.

\begin{example}\label{ex-t-gp}
  Recall that a groupoid $\RR$ is \emph{principal} if the isotropy
  groupoid $\I(\RR)$ is just $\RR^{(0)}$.  Let $\Sigma$ be a
  $\T$-groupoid over a principal groupoid $\RR$.  Then
  $\iota(\go\times\T) = \I(\Sigma)$, and if $\gugo$ is Hausdorff, then
  $\Sigma$ has central isotropy.
\end{example}

\begin{remark}
  In previous studies of $\T$-groupoids, the emphasis was on the
  quotient \cs-algebra $\cs(\RR;\Sigma)$.  Here we focus on
  $\cs(\Sigma)$.
\end{remark}

A central player in much of the work on the Effros--Hahn theory for
groupoids---as in \cite{ren:jot91} and \cite{ionwil:iumj09}---is the
\emph{equivalence relation $\RR $ associated to $\Sigma$}. By
definition, this $\RR$ is the image of the map
$\pi:\Sigma\to \go\times\go$ given by
$\pi(\gamma)=\bigl(r(\gamma),s(\gamma)\bigr)$. Since the relative
product topology on $\RR$ is unlikely to be useful in general, it is
common to equip $\RR$ with the quotient topology, which is finer
(often strictly finer) than the relative product topology. Even then,
$\RR$ need not be a tractable topological space.  However, the
isotropy groupoid $\I(\Sigma)$ acts on the right and left of $\Sigma$,
and with respect to the quotient topologies
\begin{equation}
  \RR \cong \I(\Sigma)\backslash \Sigma = \Sigma/\I(\Sigma).
\end{equation}
As observed above, $\I(\Sigma)$ has a Haar system precisely when
$u\mapsto \Sigma(u)$ is continuous.  In this case, the orbit map
$k:\Sigma\to \I(\Sigma)\backslash \Sigma$ is open (see
\cite{muhwil:plms395}*{Lemma~2.1}),\footnote{The subtlety here is that
  the orbit map for an action by a groupoid $\Gamma$ is open
  \emph{provided} the range map of $\Gamma$ is open.  This is
  automatic if $\Gamma$ has a Haar system.} and then $\RR $ is locally
compact and Hausdorff.

If $\RR $ is locally compact, then we can ask for $\RR $ to act
properly on its unit space, which we identify with $\go$.  In that
case, \cite{muhwil:ms90}*{Lemma~2.1} shows that we can identify $\RR$
with $\pi(\Sigma)$ under the relative topology inherited from
$\go\times \go$.

Hence we work under three key assumptions.
\begin{compactenum}
\item [(A1)] $\Sigma$ has central isotropy.
\item [(A2)] The isotropy map $u\mapsto \Sigma(u)$ is continuous.
\item [(A3)] The action of $\RR $ on $\go$ is proper.
\end{compactenum}
If $\Sigma$ has central isotropy and $\cs(\Sigma)$ has continuous
trace, then (A2)~and~(A3) are automatically satisfied
\cite{mrw:tams96}*{Theorem~1.1}.

In any event, we have a unit-preserving short exact sequence of
groupoids
\begin{equation}
  \begin{tikzcd}[column sep=3cm]
    q^{*}\A \arrow[r,"\iota", hook] \arrow[dr,shift left, bend right =
    15] \arrow[dr,shift right, bend right = 15]&\Sigma \arrow[r,"\pi",
    two heads] \arrow[d,shift left] \arrow[d,shift right]&\RR
    \arrow[dl,shift left, bend left = 15] \arrow[dl,shift right, bend
    left = 15]
    \\
    &\go&
  \end{tikzcd}
\end{equation}
similar to \eqref{eq:t-gr}. We study special cases of these sorts of
groupoids in the next four sections.

\begin{remark}[Irreducible Representations Induced from Characters]
  \label{rem-rho-reps}
  In our main results, we will make considerable use of irreducible
  representations of $\cs(\Sigma)$ induced from characters on a
  stability group as described in \cite{ionwil:pams08}*{\S2}.  If
  $\tau$ is a character on the stability group $\Sigma(u)$, then we will
  write $\indsu^{\Sigma}(u,\tau)$, or simply $\indsu(u,\tau)$ when
  there is no ambiguity about $\Sigma$, for the induced represenation
  $\Ind_{\Sigma(u)}^{\Sigma}(\tau)$.  Then $\indsu(u,\tau)$ is
  irreducible by \cite{ionwil:pams08}*{Theorem~5}. If
  $\sset{\lambda^{u}}_{u\in\go}$, is a Haar system on $\Sigma$ and
  $\mu$ is a Haar measure on $\Sigma(u)$, then $\indsu(u,\tau)$ acts
  by convolution on the completion of $C_{c}(\Sigma_{u})$ with respect
  to to the pre-inner product
  \begin{equation}
    \label{eq:suip}
    \ip(f_{1}|f_{2})= \int_{\Sigma(u)} \int_{\Sigma}
    \overline{f_{2}(\sigma^{-1})}
    f_{1}(\sigma^{-1}g)\,d\lambda^{u}(\sigma) \tau(g)\,d\mu(g).
  \end{equation}
  If, as will always be the case here, the orbits $[u]=\Sigma\cdot u$
  are closed, then every irreducible representation of $\cs(\Sigma)$
  factors through a restriction $\Sigma([u])=\Sigma^{[u]}_{[u]}$.
  Since the latter is equivalent as a groupoid to the isotropy group
  $\Sigma(u)$, it is easy to see directly that $\indsu(u,\tau)$ is
  irreducible.  If all the isotropy groups are abelian, then it is
  also clear that every irreducible representation on $\cs(\Sigma)$ is
  of this form for some $u\in\go$ and $\tau\in\Sigma(u)^{\wedge}$.
  Furthermore, if $[u]=[v]$ and $\sigma\in \Sigma_{u}^{v}$ then
  $\indsu(u,\tau)$ is easily seen to be equivalent to
  $\indsu(v,\sigma\cdot \tau)$ where 
  $\sigma\cdot\tau(g)= \tau(\sigma^{-1} g \sigma)$.  If we have
  central isotropy, so that we can identify $\Sigma(u)$ and
  $\Sigma(\sigma\cdot u)$, then the spectrum of $\cs(\Sigma)$ is
  parameterized by $\set{([u],\tau): \text{$u\in\go$ and
      $\tau\in \Sigma(u)^{\wedge}$}}$.
\end{remark}

\section{A Class of Examples}
\label{sec:nice-class-examples}

Let $X$ be a second-countable locally compact Hausdorff space and
$\grh$ a second-countable locally compact abelian group. Let $\shh$ be
the sheaf of germs of $\grh$-valued functions on $X$ (see
\cite{rw:morita}*{\S4.1}). Let $\aaa$ be an element in the sheaf
cohomology group $H^{2}(X,\shh)$.  Then $\aaa $ is represented by a
two cocycle $c\in Z^{2}(\U,\shh)$ for some \emph{locally finite} cover
$\U=\set{U_{i}}_{i\in I}$ of $X$ by  precompact open
sets.  We say that $c$ is \emph{normalized} if $c_{iii}(x)=1$
for all~$i$ and all $x\in U_{i}$.  It is easy to see that every
$2$-cocycle is cohomologous to a normalized one and we will assume all
our cocycles are normalized.  We record some elementary facts about
normalized cocycles for reference.
\begin{lemma}
  \label{lem-norm-coc} Let $c\in Z^{2}(\U,\shh)$ be normalized.  Then
  for all $i,j,k\in I$,
  \begin{enumerate}
  \item $c_{iij}(x)=c_{ijj}(x)=0$.
  \item $c_{iji}(x)=c_{jij}(x)$.
  \item $c_{ijk}(x)=-c_{jik}(x) +c_{iji}(x)$.
  \item $c_{ijk}(x)=-c_{ikj}(x)+c_{jkj}(x)$.
  \item $c_{iji}(x)+c_{jki}(x)= -c_{ikj}(x)+c_{iki}(x)+c_{jkj}(x)$.
  \end{enumerate}
\end{lemma}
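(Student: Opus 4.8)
The plan is to obtain all five identities by specializing the \v{C}ech $2$-cocycle condition to repeated indices. Since $\shh$ takes values in the abelian group $\grh$, I write the group operation additively, so that ``normalized'' means $c_{iii}(x)=0$ for all $i$ and $x\in U_i$, and the cocycle condition for $c\in Z^2(\U,\shh)$ reads
\[
  c_{jkl}(x)-c_{ikl}(x)+c_{ijl}(x)-c_{ijk}(x)=0
\]
for all $i,j,k,l\in I$ and all $x\in U_{ijkl}$. Each of the four terms lives on an open set containing $U_{ijkl}$, so this makes sense; when $U_{ijkl}=\emptyset$ it is vacuous, as are the specializations below, and throughout one reads the conclusions as assertions on the (possibly empty) overlap indicated.

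First I would prove (a). Substituting $(i,i,i,j)$ into the cocycle condition leaves $c_{iij}-c_{iii}=0$, so $c_{iij}=c_{iii}=0$ by normalization; substituting $(i,j,j,j)$ leaves $c_{jjj}-c_{ijj}=0$, so $c_{ijj}=c_{jjj}=0$. For (b), substituting $(i,j,i,j)$ gives $c_{jij}-c_{iij}+c_{ijj}-c_{iji}=0$, and the two middle terms vanish by (a), so $c_{iji}=c_{jij}$. For (c), substituting $(i,j,i,k)$ gives $c_{jik}-c_{iik}+c_{ijk}-c_{iji}=0$; since $c_{iik}=0$ by (a), this rearranges to $c_{ijk}=-c_{jik}+c_{iji}$. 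For (d), substituting $(i,j,k,j)$ gives $c_{jkj}-c_{ikj}+c_{ijj}-c_{ijk}=0$; since $c_{ijj}=0$ by (a), this rearranges to $c_{ijk}=-c_{ikj}+c_{jkj}$.

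Finally, for (e) I would substitute $(i,j,k,i)$ into the cocycle condition to get $c_{jki}-c_{iki}+c_{iji}-c_{ijk}=0$, that is $c_{iji}+c_{jki}=c_{iki}+c_{ijk}$, and then eliminate $c_{ijk}$ using part (d), namely $c_{ijk}=-c_{ikj}+c_{jkj}$. Substituting yields $c_{iji}+c_{jki}=-c_{ikj}+c_{iki}+c_{jkj}$, as claimed.

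There is no real obstacle here: the whole argument is choosing, for each of the five identities, the correct specialization of the four indices and then invoking normalization together with the parts already proved. The only things needing a little care are keeping the additive sign conventions consistent with the multiplicative phrasing of ``normalized'', and remembering that each specialized cocycle identity is asserted only on the intersection of the relevant $U_i$'s, which in every case contains the overlap on which the conclusion is stated.
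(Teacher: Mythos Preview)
Your proof is correct and follows essentially the same approach as the paper: each identity is obtained by specializing the \v{C}ech coboundary identity $\delta(c)_{ijkl}=0$ to repeated indices and invoking normalization together with the earlier parts. The only cosmetic difference is in~(e), where you use the substitution $(i,j,k,i)$ together with~(d), while the paper instead chains~(c) and~(d) directly; both reach the same identity with no extra work.
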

\begin{proof}
  These are all straightforward consequences of the cocycle identity.
  For example, (a) follows from computations like
  $0=\delta(c)_{iiji}(x)=
  c_{iji}(x)-c_{iji}(x)+c_{iii}(x)-c_{iij}(x)$.  For (b), consider
  $\delta(c)_{ijij}(x)$ and use~(a).  For (c), consider
  $\delta(c)_{ijik}(x)$ and (d) follows from $\delta(c)_{ijkj}(x)$.
  We get~(e) using (c)~and (d):
  \begin{align}
    c_{ikj}(x)&=-c_{ijk}(x)+c_{jkj}(x)= c_{jik}(x)-c_{iji}(x)+c_{jkj}(x)
    \\
              &=
                -c_{jki}(x)+c_{iki} (x)-c_{iji}(x) +c_{jkj}(x).  \qedhere
  \end{align}
\end{proof}

Given $\U$, we can form the blow-up groupoid\footnote{The terms
  ``pull-back'' and ``ampliation'' are also used. Blow-ups are defined
  and discussed in \cite{wil:pams16}*{\S3.3}.}  $\gu$ with respect to
the natural map of $\coprod U_{i}$ onto $X$:
\begin{align}
  \SwapAboveDisplaySkip
  \label{eq:5n}
  \gu=\set{(i,x,j):x\in U_{ij}:= U_{i}\cap U_{j}}
\end{align}
with $(i,x,j)(j,x,k)=(i,x,k)$ and $(i,x,j)^{-1}=(j,x,i)$.  In
particular, $\gu$ is a principal groupoid with unit space
$\gu^{(0)} = \coprod U_{i}$, and is equivalent to the space $X$ (see
\citelist{\cite{raetay:jamss85} \cite{hae:a84}}).

Let $\Sigma_{c}$ be the groupoid extension equal as a topological
space to $G\times\gu$ endowed with the operations
\begin{equation}\label{eq:Sigma-c mult}
  \bigl(g,(i,x,j)\bigr)\bigl(h,(j,x,k)\bigr) = \bigl(g+h+c_{ijk}(x),
  (i,x,k) \bigr)
\end{equation}
and
\begin{equation}\label{eq:7}
  \bigl(g,(i,x,j)\bigr)^{-1}= \bigl( -g-c_{iji}(x), (j,x,i)\bigr).
\end{equation}
Since
\begin{align}
  \bigl(g,(i,x,j)\bigr)\bigl(g,(i,x,j)\bigr)^{-1}
  =
  \bigl(g,(i,x,j)\bigr) \bigl(-g-c_{iji}(x),(j,x,i)\bigr) 
  = \bigl(0
  ,(i,x,i)\bigr),
\end{align}
and similarly
\begin{equation}
  \bigl(g,(i,x,j)\bigr)^{-1}\bigl(g,(i,x,j)\bigr)=\bigl(0,(j,x,j)\bigr),
\end{equation}
we can identify the unit space of $\Sigma_{c}$ with $\coprod
U_{i}$. Let $\mu$ be a Haar measure on $G$.  We equip $\Sigma_{c}$
with the Haar system $\lambda=\sset{\lambda^{(i,x)}}$ given by
\begin{equation}
  \label{eq:37}
  \lambda^{(i,x)}(f)=\sum_{j}\int_{G} f\bigl(g,(i,x,j)\bigr)\,d\mu(g).
\end{equation}
Then using Lemma~\ref{lem-norm-coc},
\begin{align}
  f*f'\bigl(g,&(i,x,j)\bigr)\\
              &=\sum_{k}\int_{G} f\bigl(h,(i,x,k)\bigr)
                f'\bigl( g-h-c_{iki}(x)+c_{kij}(x),(k,x,j)\bigr)\,d\mu(h) \\
              &= \sum_{k}\int_{G} f\bigl(h,(i,x,k)\bigr)
                f'\bigl( g-h-c_{ikj}(x),(k,x,j)\bigr)\,d\mu(h)\label{eq:11}
\end{align}
while
\begin{equation}
  f^{*}\bigl(g,(i,x,j)\bigr) = \overline{f\bigl(-g-c_{iji}(x),(j,x,i)\bigr)}.
\end{equation}

If we define $\iota :G\times\coprod U_{i}\to \Sigma_{c}$ by
\begin{equation}
  \iota\bigl(g,(i,x)\bigr) = \bigl(g,(i,x,i)\bigr),
\end{equation}
and $j:\Sigma_{c}\to \gu$ by
\begin{equation}
  j\bigl(g,(i,x,j)\bigr)= (i,x,j),
\end{equation}
then we obtain a groupoid extension
\begin{equation}\label{eq:36}
  \begin{tikzcd}[column sep=3cm]
    G\times\coprod U_{i} \arrow[r,"\iota", hook] \arrow[dr,shift left,
    bend right = 15] \arrow[dr,shift right, bend right =
    15]&\Sigma_{c} \arrow[r,"\pi", two heads] \arrow[d,shift left]
    \arrow[d,shift right]&{\gu.}\arrow[dl,shift left, bend left = 15]
    \arrow[dl,shift right, bend left = 15]
    \\
    &\coprod U_{i}&
  \end{tikzcd}
\end{equation}
We think of $\Sigma_{c}$ as a generalized twist in which $\T$ has been
replaced by $G$.  As in Remark~\ref{rem-rho-reps}, we can identify the
spectrum of $\cs(\Sigma_{c})$ as a set with $\hG\times X$ via
$(\tau,x) \mapsto [\indsu((i,x),\tau)]$ for any $i$ such that
$x\in U_{i}$.

\begin{lemma}
  \label{lem-rho-reps} Let $\Sigma_{c}$ be as above.  Let
  $I(x)=\set{j\in I:x\in U_{j}}$. Then $\indsu((i,x),\tau)$ is
  equivalent to the representation $L$ on $\ell^{2}(I(x))$ where
  $L(f)$ is given by multiplication by the matrix $A=(a_{jk})$ with
  \begin{equation}
    a_{jk}=\tau{(c_{ijk}(x))} \int_{G}f(g,(j,x,k))\tau(g)\,d\mu(g).
  \end{equation}
\end{lemma}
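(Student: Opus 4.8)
The plan is to realize $\indsu((i,x),\tau)$ explicitly on $\ell^{2}(I(x))$ by means of a suitable partial Fourier transform. Write $u=(i,x)$. By Remark~\ref{rem-rho-reps}, $\indsu((i,x),\tau)$ acts by convolution on the completion $\mathcal{H}$ of $C_{c}\bigl((\Sigma_{c})_{u}\bigr)$ for the inner product \eqref{eq:suip}, where $\lambda=\lambda^{(i,x)}$ is the Haar system \eqref{eq:37} and $\mu$ is our Haar measure on the stability group $\Sigma_{c}(u)=\set{(t,(i,x,i)):t\in G}$, which is literally $G$ because $c$ is normalized so that $c_{iii}=0$. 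Since $\U$ is locally finite, $I(x)$ is finite and $(\Sigma_{c})_{u}=\coprod_{j\in I(x)}\bigl(G\times\set{(j,x,i)}\bigr)$, so $C_{c}\bigl((\Sigma_{c})_{u}\bigr)=\bigoplus_{j\in I(x)}C_{c}(G)$. I would define
\begin{equation*}
  Wf(j)=\tau\bigl(c_{iji}(x)\bigr)\int_{G}f\bigl(g,(j,x,i)\bigr)\tau(g)\,d\mu(g)\qquad\bigl(f\in C_{c}((\Sigma_{c})_{u}),\ j\in I(x)\bigr),
\end{equation*}
so that $Wf\in\ell^{2}(I(x))$. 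The unimodular phase $\tau\bigl(c_{iji}(x)\bigr)$ is dictated by the twist: over the closed orbit $[u]=\set{(j,x):j\in I(x)}$, the reduction $\Sigma_{c}([u])$ is a central extension of the pair groupoid on the finite set $I(x)$ by $G$, and $(j,x,k)\mapsto\bigl(-c_{ijk}(x),(j,x,k)\bigr)$ is a continuous \emph{homomorphic} section exactly because of the cocycle identity $\delta(c)_{ijkl}=0$, so the natural coordinate on the $j$-th sheet of $(\Sigma_{c})_{u}$ is $g+c_{iji}(x)$, not $g$.

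First I would show $W$ is a unitary onto $\ell^{2}(I(x))$. Substituting \eqref{eq:Sigma-c mult} and \eqref{eq:7} into \eqref{eq:suip}, using Lemma~\ref{lem-norm-coc}(a) to discard the term $c_{jii}(x)=0$, and then making the changes of variable $s=-h-c_{iji}(x)$ and $r=s+t$ (legitimate because Haar measure on the locally compact abelian group $G$ is invariant under translation and inversion) collapses \eqref{eq:suip} to $\ip(f|f')=\sum_{j\in I(x)}\overline{Wf'(j)}\,Wf(j)$, the phases cancelling because they are unimodular; hence $W$ descends to an isometry of $\mathcal{H}$ into $\ell^{2}(I(x))$. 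For surjectivity, given $j_{0}\in I(x)$, elementary Fourier analysis on $G$ yields $\phi\in C_{c}(G)$ with $\int_{G}\phi(g)\tau(g)\,d\mu(g)\neq0$, and letting $f$ equal $\phi$ on the $j_{0}$-th sheet and $0$ on the others makes $Wf$ a nonzero multiple of the point mass at $j_{0}$; since $I(x)$ is finite, $W$ is onto.

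For the intertwining, for $f\in C_{c}(\Sigma_{c})$ and $\xi\in C_{c}((\Sigma_{c})_{u})$, convolution \eqref{eq:11} evaluated on $(g,(j,x,i))$ reads
\begin{equation*}
  \bigl(\indsu(u,\tau)(f)\xi\bigr)\bigl(g,(j,x,i)\bigr)=\sum_{k\in I(x)}\int_{G}f\bigl(h,(j,x,k)\bigr)\,\xi\bigl(g-h-c_{jki}(x),(k,x,i)\bigr)\,d\mu(h).
\end{equation*}
Applying $W$, performing the change of variable $g'=g-h-c_{jki}(x)$, and re-expressing the result in terms of $W\xi(k)$ turns this $G$-convolution into the sum over $k$ of $\tau\bigl(c_{iji}(x)+c_{jki}(x)-c_{iki}(x)\bigr)\,\bigl(\int_{G}f(h,(j,x,k))\tau(h)\,d\mu(h)\bigr)\,W\xi(k)$; and the cocycle identity $\delta(c)_{ijki}=0$ gives $c_{iji}(x)+c_{jki}(x)-c_{iki}(x)=c_{ijk}(x)$. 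Therefore $W\bigl(\indsu(u,\tau)(f)\xi\bigr)(j)=\sum_{k\in I(x)}a_{jk}\,W\xi(k)$ with $a_{jk}$ as in the statement, so $W$ intertwines $\indsu((i,x),\tau)$ with $L$, which is the desired equivalence.

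The step I expect to be the main obstacle is the cocycle bookkeeping in the last two paragraphs: pinning down the normalization of $W$ and identifying which instances of the identities in Lemma~\ref{lem-norm-coc} are actually needed, so that the matrix emerges in precisely the stated form rather than merely up to conjugation by a diagonal unitary. The remaining ingredients --- the Haar-measure substitutions and the density of the range of $W$ --- are routine and are made easier by the finiteness of $I(x)$.
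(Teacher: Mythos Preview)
Your proof is correct and essentially the same as the paper's: your unitary $W$ coincides with the paper's map $U(f)(j)=\int_{G}f(g-c_{iji}(x),(j,x,i))\tau(g)\,d\mu(g)$ after the change of variable $g\mapsto g+c_{iji}(x)$, and both arguments hinge on the same cocycle identity $c_{iji}+c_{jki}=c_{ijk}+c_{iki}$ (i.e.\ $\delta(c)_{ijki}=0$). Your version is slightly more explicit about surjectivity and about the motivation for the phase factor, but the computations are otherwise identical.
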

\begin{proof}
  We have $(\Sigma_{c})_{(i,x)}=\set{(g,(j,x,i):j\in I(x)}$.  Then
  $\indsu((i,x),\tau)$ acts by convolution on the completion of
  $C_{c}\bigl((\Sigma_{c})_{(i,x)}\bigr)$ with respect to the inner
  product given in \eqref{eq:suip}:
  \begin{align}
    \SwapAboveDisplaySkip
    \bigl(&f_{1}\mid f_{2}\bigr)\\
          &= \sum_{j}\int_{G}\int_{G} \overline{f_{2}(-h-c_{iji}(x),
            (j,x,i))} f_{1}(h^{-1}-c_{iji}(x) + g,(j,x,i)) \tau(g)
              \,d\mu(h)\,d\mu(g) \\
          &= \sum_{j}U(f_{1})(j) \overline{U(f_{2})(j)},
  \end{align}
  where
  \begin{equation}
    U(f)(j)=\int_{G} f(g-c_{iji}(x),(j,x,i))\tau(g)\,d\mu(g).
  \end{equation}

  Hence $U$ is a unitary from the space of $\indsu((i,x),\tau)$ onto
  $\ell^{2}(I(x))$.  But
  \begin{align}\SwapAboveDisplaySkip
    \label{eq:42}
    U(&f_{1}*f_{2})(j) \\
      &=
        \int_{G} f_{1}*f_{2}(g-c_{iji}(x), (j,x,i))\tau(g)\,d\mu(g) \\
      &= \sum_{k} \int_{G}\int_{G} 
        f_{1}(h,(j,x,k)) f_{2}(-h+g-c_{iji}(x) -c_{jki}(x),(k,x,i))
        \tau(g)
            \,d\mu(h)\,d\mu(g) \\
    \intertext{which, since $c_{iji}(x)+c_{jki}(x) =
    c_{ijk}(x)+c_{iki}(x)$, is}
      &=\sum_{k} \int_{G}\int_{G}
        f_{1}(h,(j,x,k)) f_{2}(-h+g-c_{ijk}(x) -c_{iki}(x),(k,x,i))
        \tau(g)
            \,d\mu(h)\,d\mu(g) \\
      &=\sum_{k}\tau(c_{ijk}(x))\int_{G}f_{1}(h,(j,x,k))\tau(h)\,d\mu(h)
        U(f_{2})(k)\\
      &=\sum_{k}a_{jk}U(f_{2})(k).
  \end{align}
  Thus $U$ intertwines $\indsu((i,x),\tau)$ with multiplication by
  $A=(a_{jk})$ as claimed.
\end{proof}

\begin{remark}
  There is continuous groupoid 2-cocycle $\varphi_c \in Z^2(\gu, G)$
  given by the formula
  $\varphi_c\bigl((i,x,j),(j,x,k)\bigr) = c_{ijk}(x)$ for
  $x \in U_{ijk}$. For this cocycle, $\Sigma_c$ is equal to the
  extension $\gpdtwst{\gu}{\varphi_c}$ described in
  Notation~\ref{not:Esigmadef} under the natural
  identification.
\end{remark}

Our first goal is to determine the Dixmier--Douady class of
$\cs(\Sigma_{c})$. To do so, we need the following construction. Given
a \v{C}ech $2$-cocycle $c \in Z^2(U, \uG)$, where
$\U = \{U_i\}_{i \in I}$ is a locally finite open cover of $X$, the
cover $\V = \{\hG \times U_i\}_{i \in I}$ of $\hG \times X$ is locally
finite and supports a normalized $2$-cocycle $\nucp$ such that
\begin{equation}\label{eq:10}
  \nucp_{ijk}(\tau,x)=\overline{\tau\bigl(c_{ijk}(x)\bigr)}.
\end{equation}

There is a well-defined homomorphism
\begin{equation}\label{eq:9n}
  m_{*}:H^{2}(X,\uG)\to H^{2}(\hG \times X, \uT)
\end{equation}
such that $m_*([c]) = [\nucp]$ for all $c \in Z^2(X, \uG)$.

Our first main theorem is the following computation of the
Dixmier--Douady class of $\cs(\Sigma_c)$.

\begin{thm}
  \label{thm-main-dd-calc} Suppose that $X$ is a second-countable
  locally compact Hausdorff space and that $G$ is a second-countable
  locally compact abelian group. Let $\uG$ be the sheaf of germs of
  continuous $G$-valued functions on $X$. Suppose that  $c\in
  Z^{2}(\U,\uG)$ is a 
  normalized cocycle on a locally finite cover $\U$ by precompact
  open sets representing a class $\aaa\in H^{2}(X,\uG)$, and
  let $\Sigma_{c}$ be the associated groupoid extension \eqref{eq:36}.
  Then $(\tau, x) \mapsto [\indsu((i,x),\tau)]$, $x \in U_i$, is a
  homeomorphism of $\hG\times X$ with the spectrum
  $\cs(\Sigma_{c})^{\wedge}$.  Furthermore $\cs(\Sigma_{c})$ has
  continuous trace, and with respect to this identification of the
  spectrum with $\hG\times X$, its Dixmier-Douady class
  $\delta\bigl(\cs(\Sigma_{c})\bigr)$ is equal to the image of
  $m_{*}(\aaa)$ in $H^{3}(\hG\times X,\Z)$.
\end{thm}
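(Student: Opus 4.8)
The plan is to identify $\cs(\Sigma_c)$, via a Fourier transform in the central variable, with a twisted groupoid $\cs$-algebra over a proper principal groupoid, and then apply the Dixmier--Douady machinery for such algebras.

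First, by the Remark following Lemma~\ref{lem-rho-reps}, $\Sigma_c=\gpdtwst{\gu}{\varphi_c}$ for the continuous $G$-valued groupoid $2$-cocycle $\varphi_c\bigl((i,x,j),(j,x,k)\bigr)=c_{ijk}(x)$ on $\gu$. Let $\gv$ be the blow-up (Raeburn--Taylor) groupoid of the cover $\V=\set{\hG\times U_i}$ of $\hG\times X$, so that $\gv\cong\hG\times\gu$ is an \'etale principal groupoid in which the factor $\hG$ contributes only units, and for $f\in C_c(\Sigma_c)$ set $\Phi f(\tau,(i,x,j))=\int_G f(g,(i,x,j))\,\overline{\tau(g)}\,d\mu(g)$. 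Starting from the convolution formula~\eqref{eq:11} and rearranging indices with Lemma~\ref{lem-norm-coc}, one checks that $\Phi$ carries the convolution and involution of $\cs(\Sigma_c)$ to the $\cnuc$-twisted convolution and involution on $\gv$, where $\cnuc\bigl((i,(\tau,x),j),(j,(\tau,x),k)\bigr)=\overline{\tau(c_{ijk}(x))}=\nucp_{ijk}(\tau,x)$; that is, $\cnuc$ is exactly the groupoid $2$-cocycle on $\gv$ canonically associated to the \v{C}ech cocycle $\nucp$ of~\eqref{eq:10}. Establishing that $\Phi$ has dense range and extends to an isomorphism $\cs(\Sigma_c)\cong\cs(\gv,\cnuc)$ is the first substantive step; the requisite Fourier analysis for groupoid extensions by locally compact abelian groups is the subject of Appendix~\ref{sec:Extcoc}. (Both $\gv$, being proper, and $\Sigma_c$ are amenable, so full and reduced $\cs$-algebras coincide here.)

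Second, since $\U$ is locally finite by precompact open sets, $\V$ is locally finite and $\gv$ is a proper principal groupoid equivalent to $\hG\times X$ \cite{raetay:jamss85}. Hence $\cs(\gv,\cnuc)$ is Morita equivalent to the section algebra $\Gamma_0(\hG\times X,\mathcal E)$ of a locally trivial bundle $\mathcal E$ of elementary $\cs$-algebras over $\hG\times X$ which is trivial over each $\hG\times U_i$ and whose Dixmier--Douady cocycle relative to $\V$ is $\nucp$; in particular $\cs(\Sigma_c)$ has continuous trace with spectrum $\hG\times X$. To obtain the stated form of the homeomorphism I would transport representations across $\Phi$: the irreducible representation of $\cs(\gv,\cnuc)$ given by evaluation at $(\tau,x)$---equivalently, induced from a unit of $\gv$ over $(\tau,x)$---pulls back under $\Phi$ to a representation of $\cs(\Sigma_c)$ on $\ell^2(I(x))$ acting by multiplication by precisely the matrix of Lemma~\ref{lem-rho-reps}, that is, to $\indsu((i,x),\tau)$. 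Combined with the set-level parametrisation of $\cs(\Sigma_c)^\wedge$ from Remark~\ref{rem-rho-reps}, this identifies $\cs(\Sigma_c)^\wedge$ with $\hG\times X$ via $(\tau,x)\mapsto[\indsu((i,x),\tau)]$, as a homeomorphism.

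Third, the Dixmier--Douady computation is then immediate. Morita equivalence preserves $\delta$, and since $\mathcal E$ is assembled from trivial pieces over the sets $\hG\times U_i$ with Dixmier--Douady cocycle $\nucp$, the Dixmier--Douady theorem \cite{dixdou:bsmf63}, \cite{rw:morita} gives that $\delta\bigl(\cs(\Sigma_c)\bigr)$ equals the image of $[\nucp]\in H^2(\hG\times X,\uT)$ under the canonical isomorphism $H^2(\hG\times X,\uT)\cong H^3(\hG\times X,\Z)$; and $[\nucp]=m_*(\aaa)$ by the definition of $m_*$ in~\eqref{eq:9n}--\eqref{eq:10}, which is the assertion.

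The main obstacle is the first step: making the Fourier-transform isomorphism $\cs(\Sigma_c)\cong\cs(\gv,\cnuc)$ precise at the level of $\cs$-algebras, and fixing the Fourier convention---in concert with the conventions for the Dixmier--Douady class of a twisted groupoid algebra and for $H^2(\cdot,\uT)\cong H^3(\cdot,\Z)$---so that the twist appearing is $\nucp$ rather than its inverse. A secondary point is that the general statement ``$\cs(\mathcal R,\sigma)$ has continuous trace, with Dixmier--Douady class determined by $\sigma$, whenever $\mathcal R$ is proper and principal'' may need to be developed in Section~\ref{sec:dixmier-douady-class} rather than simply quoted, building on \cite{kum:cjm86}, \cite{muhwil:plms395}, \cite{mrw:tams96}.
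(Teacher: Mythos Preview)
Your approach is essentially the paper's: a Fourier transform in the central $G$-variable gives an isomorphism of $\cs(\Sigma_c)$ onto the Raeburn--Taylor algebra $A(\nucp)\cong\cs(\gv,\cnuc)$ that intertwines $\indsu((i,x),\tau)$ with the evaluation representation at $(\tau,x)$ (via Lemma~\ref{lem-rho-reps}), after which the Dixmier--Douady class is read off from the \v{C}ech cocycle $\nucp$. The only packaging difference is that the paper computes $\delta(A(\nucp))=[\nucp]$ directly by exhibiting explicit local rank-one projections and partial isometries (Lemma~\ref{lem-prop-5.50+}) rather than via Morita equivalence to a section algebra, and---exactly as you anticipated---must redo the Raeburn--Taylor argument (Lemmas~\ref{lem-prod-ok} and~\ref{lem-prop-5.50+}) because the sets $\hG\times U_i$ in $\V$ need not be precompact; note also that the paper's Fourier convention has $\tau(g)$ rather than $\overline{\tau(g)}$, consistent with its convention $\cnuc=\overline{\nucp}$ in~\eqref{eq:5}, and that Appendix~\ref{sec:Extcoc} treats extensions and cocycles rather than the Fourier analysis you attribute to it.
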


The proof of Theorem~\ref{thm-main-dd-calc} requires some preparation,
including the Raeburn--Taylor construction described in the next
section. So we defer the proof to
Section~\ref{sec:dixmier-douady-class}.

\section{The Raeburn--Taylor Algebra}
\label{sec:raeb-tayl-algebra}

For our computation of the Dixmier-Douady class of $\cs(\Sigma_{c})$,
we want a slight modification of the \emph{Raeburn--Taylor Algebra}
based on their original construction in \cite{raetay:jamss85} and
reproduced in \cite{rw:morita}*{Proposition~5.40}.  Specifically,
given a \emph{normalized} $2$-cocycle
$\nu=\sset{\nu_{ijk}} \in Z^{2}(\U,\uT)$ defined on a \emph{locally
  finite} cover $\U$  by precompact open sets, we want to
produce a concrete \cs-algebra $A(\nu)$ with finite-dimensional
representations such that $\delta(A(\nu))$ is the image of $[\nu]$ in
$H^{3}(X,\Z)$.  In \cite{rw:morita}*{Proposition~5.40} and in
\cite{raetay:jamss85}, it is assumed for convenience that $\nu$ is
alternating.\footnote{It is not stated in \cite{raetay:jamss85} that
  their cocycle is alternating, but it is needed for their
  constructions.  Furthermore, there is a subtle caveat in the
  Raeburn--Taylor construction.  They use the Shrinking Lemma (see
  \cite{rw:morita}*{Lemma~4.32}) to replace $\U$ by a cover
  $\V=\sset{V_{i}}_{i\in I}$ \emph{with the same index set} such that
  $\overline{V_{i}}\subset U_{i}$.  Replacing $\U$ by $\V$ allows them
  to assume that each $\nu_{ijk}$ extends to $\overline{U_{ijk}}$.
  The exposition here---taken from \cite{rw:morita}---avoids this
  technicality.}  Although every cocycle is cohomologous to an
alternating one by \cite{rw:morita}*{Proposition~4.41}, it will
simplify our arguments here to observe that basically the same
constructs work in the normalized case.  We supply some of the details
for completeness and also since we will have to push the envelope a bit
further in Section~\ref{sec:dixmier-douady-class}.

To construct $A(\nu)$, we begin by forming the algebra $ A_{1}(\nu)$
which is the set of sparse $I\times I$-matrices
\begin{equation}
  f=\bigl(f_{ij}\bigr)_{i,j\in I}
\end{equation}
where each $f_{ij}\in C_{0}(X)$ and vanishes off
$U_{ij}$.\footnote{Requiring that each $f_{ij}$ belongs to $C_{0}(X)$
  rather than just to $C(X)$ is redundant if each $U_{ij}$ is
  precompact as we are assuming here.  The key point is that the
  $f_{ij}|_{U_{ij}}$ extend continuously to all of $X$.  We include
  this nicety here so that, later on, we can more easily extend this
  construction to a particular cover by sets that are not necessarily
  precompact.} For each $x\in X$, the set
$I(x):=\set{i\in I:x\in U_{i}}$ is finite.  Thus if $n_{x}= |I(x)|$,
then $\bigl(f_{ij}(x)\bigr)_{i,j\in I}$ is an $n_{x}\times n_{x}$
matrix.  We define a multiplication on $A_{1}(\nu)$ by twisting the
usual matrix multiplication with $\nu$:
\begin{equation}
  (f_{ij} )(g_{lk}) = (h_{ij})
\end{equation}
where
\begin{equation}\label{eq:4}
  h_{ik}(x)=\sum_{j} \overline{\nu_{ijk}(x)}
  f_{ij}(x)g_{jk}(x).
\end{equation}
To see that the sum in \eqref{eq:4} is meaningful, observe that it is
always a finite sum:
\begin{equation}
  h_{ik}(x) =\begin{cases}
    \sum_{\set{j\in I:x\in U_{ijk}}} \overline{\nu_{ijk}(x)}
    f_{ij}(x)g_{jk}(x) &\text{if $x\in U_{ijk}$ for some $j$,
      and}\\
    0&\text{otherwise.}
  \end{cases}
\end{equation}
As in \cite{rw:morita}*{Lemma~5.39}, using the local finiteness of the
cover and the compactness of the $\overline{U_{i}}$, it is not hard to
see that each $h_{ik}$ is continuous and vanishes off $U_{ik}$.  (This
will also be a special case of Lemma~\ref{lem-prod-ok}.)

To get an involution, we have adjust the definition of the involution
on $A(\nu)$ given by Raeburn and Taylor to account for the fact that
our cocycle might not be alternating: we define
\begin{align}\SwapAboveDisplaySkip\label{eq:3}
  f^{*}=(f_{ij})^{*}=(g_{ij})\quad\text{where
  $g_{ij}(x)=\nu_{iji}(x)\overline{f_{ji}(x)}$.}
\end{align}
This map is involutive in view of Lemma~\ref{lem-norm-coc}(b).  To see
that it is anti-multiplicative, we require
Lemma~\ref{lem-norm-coc}(e):
\begin{align}
  \label{eq:17}
  (f*g)^{*}_{ij}(x)
  &= \nu_{iji}(x) \overline{(f*g)_{ji}(x)}\\
  &= \nu_{iji}(x) \sum_{k}\nu_{jki}(x) \overline{f_{jk}(x)}
    \overline{g_{ki}(x)} \\
  \intertext{which, using Lemma~\ref{lem-norm-coc}(e), is}
  &= \sum_{k}\overline{\nu_{ikj}(x)} \nu_{iki}(x) \overline{g_{ki}(x)}
    \nu_{jkj} (x) \overline{f_{jk}(x)} \\
  &= \sum_{k} \overline{\nu_{ikj}(x)} g^{*}_{ik}(x) f^{*}_{kj}(x) \\
  &=(g^{*}*f^{*})_{ij}(x).
\end{align}

We now follow the discussion preceding
\cite{rw:morita}*{Proposition~5.40} \emph{mutatis mutandis}.

%
For each pair $(i,x)\in \coprod U_{i}$, define a representation
$\pi_{(i,x)}$ of $A_{1}(\nu)$ on $\ell^{2}(I(x))$ by letting
$\pi_{(i,x)}(f)$ act by multiplication by the matrix
\begin{equation}
  \bigl(\overline{\nu_{ikl}(x)} f_{kl}(x)\bigr)_{kl}.
\end{equation}
Using some straightforward cocycle identities, we see that
$\pi_{(i,x)}$ is multiplicative and $*$-preserving.  The
representations $\pi_{(i,x)}$ and $\pi_{(j,x)}$ are equivalent, so we
get a seminorm
\begin{equation}
  \|f\|_{x}=\|\pi_{(i,x)}(f)\|\quad\text{for any $i$ such that $x\in
    U_{i}$}.
\end{equation}

Just as in \cite{rw:morita}*{Proposition~5.40} and
\cite{raetay:jamss85}*{Theorem~1}, we can let
\begin{equation}
  A(\nu)=\set{f\in A_{1}(\nu):\text{$x\mapsto \|f\|_{x}$ vanishes at
      infinity on $X$}}.
\end{equation}
Then $\|f\|=\sup_{x}\|f\|_{x}$ is a complete norm on
$A(\nu)$. Furthermore, with respect to this norm, $A(\nu)$ is a
continuous-trace \cs-algebra with spectrum
$X=\set{[\pi_{(i,x)}]: x\in X}$ and Dixmier--Douady class
$[\nu]$. (Here and elsewhere we will often write $[\nu]$ for its image
in $H^{3}(X, \Z)$.)

\begin{remark}[The Raeburn--Taylor Groupoid] \label{rmk:rtgroupoid}
  Nowadays we favor realizing $A(\nu)$ as a groupoid \cs-algebra
  twisted by a continuous $2$-cocycle $\cnu$.  The groupoid is the
  blow-up $\gu$ associated to the cover $\U$ of $X$ corresponding to
  $\nu$
  and the cocycle $\cnu$ in $Z^{2}(\gu,\T)$ is given by
  \begin{equation}\label{eq:5}
    \cnu\bigl((i,x,j),(j,x,k)\bigr)=\overline{\nu_{ijk}(x)}.
  \end{equation}
  (The complex conjugate in \eqref{eq:5} is missing from the formula
  in \cite{raetay:jamss85}.)  Note that $\cnu$ is normalized as in
  Appendix~\ref{sec:Extcoc} since $\nu$ is.
  The operations in $C_{c}(\gu,\cnu)$ are given by
  \begin{align}
    f*g(i,x,k)&=\sum_{j} f(i,x,j)g(j,x,k)\cnu\bigl((i,x,j),(j,x,k)\bigr) \\
              &= \sum_{j} f(i,x,j)g(j,x,k)\overline{\nu_{ijk}(x)},
  \end{align}
  and
  \begin{equation}
    f^{*}(i,x,j)=\overline{f(j,x,i)}
    \overline{\cnu\bigl((i,x,j),(j,x,i)\bigr)} =\nu_{iji}(x)
    \overline{f(j,x,i)} . 
  \end{equation}

  Looking over these formulas, it is immediate that we get a
  $*$-homomorphism $\Phi:C_{c}(\gu,\cnu)\to A(\nu)$ given by
  \begin{equation}\label{eq:6}
    \Phi(f)_{ij}(x)=f(i,x,j).
  \end{equation}
  Just as observed in \cite{raetay:jamss85}*{Remark~3}, this map extends to an
  isomorphism.
  \begin{prop}
    [\cite{raetay:jamss85}*{Remark~3}] The map $\Phi$ extends to an
    isomorphism of $\cs(\gu,\cnu)$ onto $A(\nu)$.
  \end{prop}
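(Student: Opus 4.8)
The plan is to follow Raeburn and Taylor \cite{raetay:jamss85}*{Remark~3}, adjusting for the fact that our cocycle is normalized rather than alternating. First I would verify that $\Phi$ is a $*$-homomorphism of $C_{c}(\gu,\cnu)$ into $A_{1}(\nu)$ by comparing the structure maps already written down above: from $f*g(i,x,k)=\sum_{j}f(i,x,j)g(j,x,k)\overline{\nu_{ijk}(x)}$ and $\Phi(f)_{ij}(x)=f(i,x,j)$ one reads off $\Phi(f*g)_{ik}(x)=\sum_{j}\overline{\nu_{ijk}(x)}\,\Phi(f)_{ij}(x)\Phi(g)_{jk}(x)=(\Phi(f)\Phi(g))_{ik}(x)$, matching the twisted product \eqref{eq:4}, and from $f^{*}(i,x,j)=\nu_{iji}(x)\overline{f(j,x,i)}$ one gets $\Phi(f^{*})_{ij}(x)=\nu_{iji}(x)\overline{\Phi(f)_{ji}(x)}$, matching the involution \eqref{eq:3}. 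A function in $C_{c}(\gu)$ is supported in finitely many of the sets $\set{(i,x,j):x\in U_{ij}}$ and restricts on each to a compactly supported continuous function on $U_{ij}$, so $\Phi\bigl(C_{c}(\gu,\cnu)\bigr)$ is exactly the algebra of sparse matrices with finitely many nonzero entries, each lying in $C_{c}(U_{ij})$; this is a dense $*$-subalgebra of $A(\nu)$.

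The substantive step is to show $\Phi$ is isometric for the universal $C^{*}$-norm on $C_{c}(\gu,\cnu)$, and I would do this by identifying the representations $\pi_{(i,x)}\circ\Phi$ with the regular representations of $\cs(\gu,\cnu)$. The groupoid $\gu$ is étale with unit space $\coprod U_{i}$, so the regular representation $\Ind\varepsilon_{(i,x)}$ induced from the point mass at the unit $(i,x)$ acts on $\ell^{2}\bigl((\gu)_{(i,x)}\bigr)\cong\ell^{2}(I(x))$, and unwinding the convolution formula shows $\Ind\varepsilon_{(i,x)}(f)$ is multiplication by the matrix $\bigl(\overline{\nu_{kli}(x)}\,f(k,x,l)\bigr)_{k,l\in I(x)}$, whereas $\pi_{(i,x)}(\Phi(f))$ is multiplication by $\bigl(\overline{\nu_{ikl}(x)}\,f(k,x,l)\bigr)_{k,l}$. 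The identities of Lemma~\ref{lem-norm-coc}, in multiplicative form for $\uT$, yield $\nu_{kli}(x)=\nu_{ikl}(x)\,\nu_{lil}(x)\,\nu_{kik}(x)^{-1}$, so the diagonal unitary on $\ell^2(I(x))$ with entries $\overline{\nu_{mim}(x)}$, $m\in I(x)$, conjugates one matrix into the other; hence $\pi_{(i,x)}\circ\Phi$ is unitarily equivalent to $\Ind\varepsilon_{(i,x)}$. Since $\gu$ is a proper principal groupoid---it is equivalent to the space $X$---it is amenable, so $\cs(\gu,\cnu)$ coincides with its reduced completion $\cs_{r}(\gu,\cnu)$, whose norm on $C_{c}(\gu,\cnu)$ is the supremum of $\|\Ind\varepsilon_{(i,x)}(f)\|$ over all units $(i,x)$. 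Combining these, $\|f\|_{\cs(\gu,\cnu)}=\sup_{x}\|\pi_{(i,x)}(\Phi(f))\|=\|\Phi(f)\|_{A(\nu)}$.

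It then follows that $\Phi$ is an isometric $*$-homomorphism from the dense subalgebra $C_{c}(\gu,\cnu)$ of $\cs(\gu,\cnu)$ onto a dense subalgebra of $A(\nu)$, hence extends to an isometric $*$-isomorphism of $\cs(\gu,\cnu)$ onto $A(\nu)$. I expect the only genuine work to be in the middle step: fixing the conventions for the regular representation of a twisted étale groupoid $\cs$-algebra, and checking the cocycle identity $\nu_{kli}=\nu_{ikl}\,\nu_{lil}\,\nu_{kik}^{-1}$ from Lemma~\ref{lem-norm-coc}, which plays the role that antisymmetry of the cocycle plays in \cite{raetay:jamss85}; the appeal to amenability to pass from the reduced to the full norm is routine.
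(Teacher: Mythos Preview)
Your argument is correct, but it follows a somewhat different route from the paper's. The paper does not prove the proposition directly; it simply remarks that the proof is ``essentially the same, but easier, than the proof of Theorem~\ref{thm-main-dd-calc}.'' Looking at that proof, the paper's template is: show that $\Phi$ intertwines each induced representation $\indsu((i,x))$ with $\pi_{(i,x)}$, invoke Remark~\ref{rem-rho-reps} to conclude that these induced representations exhaust the irreducibles of $\cs(\gu,\cnu)$ (since $\gu$ is principal with closed orbits), deduce isometry directly from this exhaustion, and then obtain surjectivity by observing that the image is a rich subalgebra of the continuous-trace algebra $A(\nu)$ and applying \cite{dix:cs-algebras}*{Proposition~11.1.6}.

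Your route instead matches the regular representations $\Ind\varepsilon_{(i,x)}$ with $\pi_{(i,x)}$ via a diagonal-unitary conjugation (your cocycle identity $\nu_{kli}=\nu_{ikl}\,\nu_{lil}\,\nu_{kik}^{-1}$ is exactly $\delta(\nu)_{ikli}=0$ combined with part~(b) of Lemma~\ref{lem-norm-coc}, so this is fine), then appeals to amenability of $\gu$ to collapse the full and reduced norms, and finally gets surjectivity from a density claim. All of this works; for a principal groupoid the induced representations of Remark~\ref{rem-rho-reps} are precisely the regular representations, so at bottom you are using the same family of representations. The trade-off is that the paper's exhaustion-of-irreducibles argument avoids invoking amenability and the rich-subalgebra trick avoids having to verify density of the finite sparse matrices in $A(\nu)$, which you assert without proof. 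That density is true but is not entirely immediate: one truncates to a finite index set using local finiteness and the vanishing-at-infinity of $x\mapsto\|f\|_{x}$, noting that the truncation is a compression and hence norm-decreasing, and then approximates the remaining finitely many entries by $C_{c}$ functions. If you keep your approach, it would be worth adding a line to this effect.
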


  The proof is essentially the same, but easier, than the proof of
  Theorem~\ref{thm-main-dd-calc} below.
\end{remark}

\section{The Dixmier--Douady Class of $\cs(\Sigma_c)$}
\label{sec:dixmier-douady-class}

Given a locally compact space $X$, a locally compact abelian group $G$
and a $2$-cocycle $c \in H^2(X, \shh)$, let $\nucp$ be as in
\eqref{eq:10}. We can form the associated Raeburn--Taylor twisted
groupoid $(\gv,\cnuc)$. We want to verify that $\cs(\gv,\cnuc)$ is a
continuous-trace \cs-algebra with Dixmier--Douady class
$[\nucp]=[m_{*}\bigl([c]\bigr)]$, and then to realize $\cs(\gv,\cnuc)$
concretely as $A(\nucp)$.  Unfortunately we cannot refer directly to
\cite{rw:morita}*{Proposition~5.40} for this because $\V$ is not a
cover of $\gu\times X$ by precompact sets and the proof of
\cite{rw:morita}*{Proposition~5.40} assumes this---even if it is not
mentioned in the statement. So in this section we show that
$\cs(\gv,\cnuc)$ is continuous-trace with the desired Dixmier--Douady
class  for the given cover $\V$, and then use this to
prove Theorem~\ref{thm-main-dd-calc}.

We let $A_{1}(\nucp)$ be defined just as in
Section~\ref{sec:raeb-tayl-algebra}.  We want to define
$(f_{ij})(g_{ij})$ to be the matrix $(h_{ij})$ defined by the
equation~\eqref{eq:4}. To see that this determines a binary operation
on $A_{1}(\nucp)$ we need the following analogue of
\cite{rw:morita}*{Proposition~5.39}.
\begin{lemma}\label{lem-prod-ok}
  Let $f=(f_{ij})$ and $g=(g_{ij})$ be elements of $A_{1}(\nucp)$.
  Define
  \begin{equation}\label{eq:8}
    h_{ik}(\tau,x) =\sum_{j} f_{ij}(\tau,x)g_{jk}(\tau,x)
    \overline{\nucp_{ijk}(\tau,x)}.
  \end{equation}
  Then $h_{ik}\in C_{0}(\hG\times X)$ and vanishes off $V_{ik}$.
\end{lemma}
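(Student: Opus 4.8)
The plan is to reduce the (a priori only locally finite) sum defining $h_{ik}$ to a genuinely \emph{finite} sum of functions in $C_{0}(\hG\times X)$, each of which vanishes off $V_{ik}$; the asserted conclusion is then immediate, since a finite sum of such functions has the same two properties.

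\emph{Step 1: only finitely many $j$ contribute.} The $j$-th summand $f_{ij}(\tau,x)g_{jk}(\tau,x)\overline{\nucp_{ijk}(\tau,x)}$ can be nonzero only where both $f_{ij}$ and $g_{jk}$ are nonzero; since $f_{ij}$ vanishes off $V_{ij}=\hG\times U_{ij}$ and $g_{jk}$ vanishes off $V_{jk}=\hG\times U_{jk}$, this forces $x\in U_{ijk}$, and in particular $U_{j}\cap U_{i}\neq\emptyset$. Because $\U$ is locally finite and $\overline{U_{i}}$ is compact (the cover being by precompact sets), only finitely many members of $\U$ meet $\overline{U_{i}}$, hence only finitely many $j\in I$ satisfy $U_{j}\cap U_{i}\neq\emptyset$; call this finite set $F$. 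Then $h_{ik}=\sum_{j\in F}f_{ij}g_{jk}\overline{\nucp_{ijk}}$.

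\emph{Step 2: each summand, extended by zero, lies in $C_{0}(\hG\times X)$ and vanishes off $V_{ik}$.} Fix $j\in F$. The product $f_{ij}g_{jk}$ is an element of $C_{0}(\hG\times X)$ that vanishes off $V_{ij}\cap V_{jk}=V_{ijk}$, while $\nucp_{ijk}$ is continuous and unimodular on the open set $V_{ijk}$ --- indeed $\nucp_{ijk}(\tau,x)=\overline{\tau(c_{ijk}(x))}$ is continuous because $c_{ijk}\colon U_{ijk}\to G$ is continuous and evaluation $\hG\times G\to\T$ is continuous. Extending $f_{ij}g_{jk}\overline{\nucp_{ijk}}$ by zero off $V_{ijk}$ gives a continuous function on $\hG\times X$: this is clear on the open set $V_{ijk}$, and at a point $p\notin V_{ijk}$ it follows from $(f_{ij}g_{jk})(p)=0$ together with the bound $|f_{ij}g_{jk}\overline{\nucp_{ijk}}|\le|f_{ij}g_{jk}|$, valid wherever $\nucp_{ijk}$ is defined. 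That same bound shows the extended summand lies in $C_{0}(\hG\times X)$, and it visibly vanishes off $V_{ijk}\subseteq V_{ik}$. Summing over the finite set $F$ completes the proof.

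\emph{Where care is needed.} The reason one cannot simply invoke \cite{rw:morita}*{Proposition~5.39} here is that $\V$ is not a cover of $\hG\times X$ by precompact sets. But the only compactness actually used in the argument is that of $\overline{U_{i}}$ in $X$, which still holds by hypothesis, and the noncompact factor $\hG$ plays no role in the finiteness of $F$. The one slightly delicate point is the continuity of each extended summand at the boundary of $V_{ijk}$, which is precisely what the unimodular bound in Step 2 handles; I expect that to be the only place where the verification needs to be stated with some care.
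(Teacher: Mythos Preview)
Your proof is correct and follows essentially the same approach as the paper: reduce to a finite sum using local finiteness and compactness of $\overline{U_{i}}$, then verify that each summand (extended by zero) lies in $C_{0}(\hG\times X)$. Your treatment of boundary continuity via the unimodular bound $|f_{ij}g_{jk}\overline{\nucp_{ijk}}|\le|f_{ij}g_{jk}|$ is slightly slicker than the paper's sequential contradiction argument, but the underlying idea is the same.
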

\begin{proof}
  Clearly, $h_{ik}$ vanishes off $V_{ik}$.  Since each point $x$ in
  the compact set $\overline{U_{ij}}$ has a neighborhood $W$ that
  meets only finitely many $U_{l}$, there is a finite set $F$ such
  that
  \begin{equation}
    h_{ik}(\tau,x)=\sum_{j\in F}  f_{ij}(\tau,x)g_{jk}(\tau,x)
    \overline{\nucp_{ijk}(\tau,x)}
  \end{equation}
  for all $(\tau,x)\in V_{ij}=\hG\times U_{ij}$.

  Since each summand is in $C_{0}(\hG\times X)$, it 
  will suffice to see that $h_{ik}$ is continuous on $\hG\times
  X$. Suppose that $(\tau_{n},x_{n})\to (\tau_{0},x_{0})$.  It is
  enough to show that
  $h_{ik}(\tau_{n},x_{n})\to h_{ik}(\tau_{0},x_{0})$.  For this, it
  suffices to consider each summand
  \begin{equation}
    a_{j}(\tau,x)=
    \begin{cases}
      f_{ij}(\tau,x)g_{jk}(\tau,x)
      \overline{\nucp_{ijk}(\tau,x)}&\text{if
        $x\in U_{ijk}$}\\
      0&\text{otherwise.}
    \end{cases}
  \end{equation}
  We clearly have $h_{ik}(\tau_n, x_n) \to h_{ik}(\tau_0, x_0)$ if
  $(\tau_0, x_0) \in V_{ijk}$ or if
  $(\tau_0, x_0) \not\in \overline{V_{ijk}}$. So we suppose that
  $(\tau_{0},x_{0}) \in \overline {V_{ijk}}\setminus V_{ijk}=\hG\times
  (\overline{U_{ijk}}\setminus U_{ijk})$.

  We suppose that
  $a_{j}(\tau_{n},x_{n}) \not\to a_{j}(\tau_{0},x_{0})=0$ and derive a
  contradiction. By passing to a subsequence, we can assume that
  $|a_{j}(\tau_{n},x_{n})|\ge \epsilon > 0$ for all $n$.  Since
  $x_{0}\notin U_{ijk}$, we can assume by symmetry that
  $x\notin U_{ij}$ and hence that
  $x_{0}\in \overline{U_{ij}}\setminus U_{ij}$.  Since $f_{ij}$ is
  continuous on $X$ and vanishes off $U_{ij}$, we have
  $f_{ij}(x_{n})\to 0$. Hence $a_{j}(\tau_{n},x_{n})\to 0$, a
  contradiction.
\end{proof}

Since there is no difficulty with the involution as defined in
\eqref{eq:3}, we see that $A_{1}(\nucp)$ is a $*$-algebra just as in
Section~\ref{sec:raeb-tayl-algebra}.  We get seminorms
$\|f\|_{(\tau,x)} =\|\pi_{(i,(\tau,x))}(f)\|$ almost exactly as in
\cite{rw:morita}*{Proposition~5.40}, and let
\[
  A(\nucp) := \{f\in A_{1}(\nucp) : (\tau,x)\mapsto
  \|f\|_{(\tau,x)}\text{ vanishes at infinity}\}.
\]
Just as in the proof of \cite{rw:morita}*{Proposition~5.40},
$A(\nucp)$ is complete with respect to the norm
\begin{align}\SwapAboveDisplaySkip
  \|f\|=\sup_{(\tau,x)}\|f\|_{(\tau,x)},
\end{align}
and has spectrum identified (topologically) with $\hG\times X$.
Moreover, we have the following analogue of the Raeburn--Taylor
result.
\begin{lemma}
  \label{lem-prop-5.50+} The \cs-algebra $A(\nucp)$ has continuous
  trace with spectrum $\hG\times X$ and Dixmier-Douady class
  $\delta(A(\nucp))=[\nucp]$.
\end{lemma}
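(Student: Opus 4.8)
The spectrum of $A(\nucp)$ has already been identified topologically with the Hausdorff space $\hG\times X$, and $A(\nucp)$ has already been shown to be complete, so the plan is to verify just the local rank-one (Fell) condition and then to compute $\delta(A(\nucp))$, transcribing the Raeburn--Taylor argument from \cite{rw:morita}*{Proposition~5.40} with Lemma~\ref{lem-prod-ok} playing the role of \cite{rw:morita}*{Proposition~5.39}. For continuous trace, I would fix $(\tau_{0},x_{0})\in\hG\times X$, choose $i$ with $x_{0}\in U_{i}$, take a real-valued $\phi\in C_{c}(\hG\times X)$ supported in $V_{i}$ and equal to $1$ on a neighbourhood $W\subseteq V_{i}$ of $(\tau_{0},x_{0})$, and let $p\in A(\nucp)$ be the matrix with $p_{ii}=\phi$ and all other entries zero. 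Since $\nucp$ is normalized, the cocycle identity gives $\nucp_{lii}\equiv 1$ on every overlap (the multiplicative analogue of Lemma~\ref{lem-norm-coc}(a)), so for each $(\tau,x)\in V_{l}\cap V_{i}$ the operator $\pi_{(l,(\tau,x))}(p)$ is $\phi(\tau,x)$ times the rank-one projection onto the $i$th basis vector of $\ell^{2}(I(x))$; on $W$ it is a rank-one projection. As $\hG\times X$ is Hausdorff, $A(\nucp)$ is then of continuous trace.

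For the Dixmier--Douady class I would run the usual clutching computation relative to the cover $\V$. Over each $V_{i}$ the matrix units cut down by bump functions trivialize the localized algebra $A(\nucp)|_{V_{i}}$; concretely the continuous field of rank-one projections $(\tau,x)\mapsto\pi_{(i,(\tau,x))}(p)$, with $\phi$ taken equal to $1$ wherever needed on $V_{i}$, furnishes the trivialization. On an overlap $V_{ij}$ the element whose only nonzero entry is a bump function in the $(i,j)$-slot supplies the transition $u_{ij}$ between the $i$th and $j$th trivializations, and reading off the twisted product \eqref{eq:8} one finds $u_{ij}u_{jk}=\overline{\nucp_{ijk}}\,u_{ik}$ over $V_{ijk}$. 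Hence the Dixmier--Douady cocycle of $A(\nucp)$ relative to $\V$ is cohomologous to $\nucp$, and under the canonical isomorphism $H^{2}(\hG\times X,\uT)\cong H^{3}(\hG\times X,\Z)$ this yields $\delta(A(\nucp))=[\nucp]$. This is precisely the computation of \cite{raetay:jamss85}*{Theorem~1} (reproduced in \cite{rw:morita}*{Proposition~5.40}), and each of its steps takes place over the finitely many indices $l$ with $V_{l}$ meeting a fixed small neighbourhood, so precompactness of the cover is never used.

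The main obstacle is bookkeeping rather than mathematics: one must confirm that nowhere in \cite{rw:morita}*{Proposition~5.40} is global compactness of the $\overline{V_{i}}$ invoked---only the local finiteness of $\V$ (inherited from $\U$) and the finiteness of each $I(\tau,x)=I(x)$---and that the one place where precompactness genuinely enters that argument, namely the continuity and vanishing at infinity of twisted products, is now supplied by Lemma~\ref{lem-prod-ok}. Completeness of $A(\nucp)$, the other potential global sticking point, has already been dealt with in the discussion preceding the lemma. Once those checks are in place, the conclusions of \cite{rw:morita}*{Proposition~5.40} carry over without further change.
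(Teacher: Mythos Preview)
Your continuous-trace argument is fine and matches the paper's. The gap is in the Dixmier--Douady computation, and it is exactly the point you dismiss at the end: precompactness \emph{is} used, just not in the place you checked.

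To run \cite{rw:morita}*{Lemma~5.28} relative to the cover $\V$, you need closed sets $F_{i}\subset V_{i}$ whose interiors cover $\hG\times X$, together with elements $p_{i},v_{ij}\in A(\nucp)$ whose images are rank-one projections and partial isometries on the $F_{i}$ and $F_{ij}$. Since the $V_{i}=\hG\times U_{i}$ are indexed by $I$ alone, any such $F_{i}$ must project onto all of $\hG$. Your candidate $p_{i}$ has $(p_{i})_{ii}=\phi$ with $\phi\equiv 1$ on $F_{i}$; but a function in $C_{0}(\hG\times X)$ cannot be identically $1$ on a set projecting onto a non-compact $\hG$. The same obstruction kills the ``bump function in the $(i,j)$-slot'' candidates for $v_{ij}$: the relation $v_{ij}v_{jk}=\overline{\nucp_{ijk}}\,v_{ik}$ only holds where all three bump functions equal $1$, and you cannot arrange this on a cover of $\hG\times X$ while staying in $C_{0}$. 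So ``$\phi$ taken equal to $1$ wherever needed on $V_{i}$'' is exactly what is not available.

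The paper's remedy is to refine first: choose a locally finite cover $\{O_{n}\}$ of $\hG$ by precompact open sets, set $W_{n,i}=O_{n}\times U_{i}$, and run the Lemma~5.28 argument on the precompact cover $\W=\{W_{n,i}\}$. The shrinking lemma then produces compact $F_{n,i}\subset W_{n,i}$, and bump functions identically $1$ on $F_{n,i}$ lie in $C_{c}(\hG\times X)\subset C_{0}(\hG\times X)$, so the required $p(n,i)$ and $v((n,i),(m,j))$ are genuine elements of $A(\nucp)$. The cocycle one reads off is $\tilde\nucp_{(n,i)(m,j)(l,k)}(\tau,x)=\nucp_{ijk}(\tau,x)$, which represents $[\nucp]$ because $\W$ refines $\V$. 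This extra refinement step is the one missing idea in your proposal.
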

\begin{proof}
  We have already seen that $A(\nucp)$ is a \cs-algebra with Hausdorff
  spectrum. We continue by making the necessary modifications to the
  proof of \cite{rw:morita}*{Proposition~5.40}.  Let $\sset{O_{n}}$ be
  a locally finite cover of $\hG$ with open, precompact sets.  To show
  that $A(\nucp)$ has continuous trace, we show that it has local
  rank-one projections as required by
  \cite{rw:morita}*{Definition~5.13}.  Let $(\tau,x)\in\hG\times X$,
  say with $(\tau,x)\in O_{n}\times U_{i}$.  Let
  $\phi\in C_{c}^{+}(O_{n}\times U_{i})$ be such that $\phi\equiv1$
  near $(\tau,x)$.  Let
  \begin{equation}
    \label{eq:18}
    p_{jk}(x) =
    \begin{cases}
      \phi(x)&\text{if $j=i=k$, and} \\ 0 &\text{otherwise.}
    \end{cases}
  \end{equation}
  Then $p=(p_{jk})\in A(\nucp)$ and
  \begin{equation}
    \label{eq:19}
    \pi_{(i,(\tau',x'))}(p)
  \end{equation}
  is a rank-one projection for $(\tau',x')$ near
  $(\tau,x)$.
  This suffices.

  We will calculate $\delta(A(\nucp))$ using
  \cite{rw:morita}*{Lemma~5.28}.  Using the shrinking lemma (see
  \cite{rw:morita}*{Lemma~4.32}), we can find compact sets $F_{n,i}$
  in $W_{n,i}:= O_{n}\times U_{i}$ such that the interiors of the
  $F_{n,i}$ cover $\hG\times X$.  Since $\W=\sset{W_{n,i}}$ is a
  refinement of $\V$, $[\nu]$ is represented by the cocycle in
  $Z^{2}(\W,\uT)$ given by
  \begin{equation}
    \label{eq:20}
    \tilde \nucp_{(n,i)(m,j)(l,k)}(\tau,x)=\nucp_{ijk}(\tau,x).
  \end{equation}

  Let $\phi_{n,i}\in C_{c}^{+}(W_{n,i})$ be such that
  $\phi_{n,i}\equiv 1$ on $F_{n,i}$.  Then as above, we get
  $p(n,i)\in A(\nucp)$ such that
  \begin{equation}
    \label{eq:21}
    \pi_{(i,(\tau,x))}(p(n,i))
  \end{equation}
  is a rank-one projection for all $(\tau,x)\in F_{n,i}$.  Similarly,
  let $\phi_{(n,i)(m,j)} \in C_{c}^{+}(W_{(n,i)(m,j)})$ be identically
  one on $F_{(n,i)(m,j)}$.  Then we get $v((n,i),(m,j))\in A(\nucp)$
  with
  \begin{equation}
    \label{eq:22}
    v((n,i),(m,j))_{rs}(\tau,x)=
    \begin{cases}
      \phi_{(n,i)(m,j)}(\tau,x)&\text{if $r=i$ and $s=j$, and} \\ 0
      &\text{otherwise.}
    \end{cases}
  \end{equation}
  Then check that
  \begin{equation}
    \label{eq:23}
    \pi_{(i,(\tau,x))} \bigl(v((n,i),(m,j))v((n,i),(m,j))^{*}\bigr) =
    \pi_{(i,(\tau,x))} \bigl(p(n,i)\bigr),
  \end{equation}
  while
  \begin{equation}
    \label{eq:24}
    \pi_{(i,(\tau,x))} \bigl(v((n,i),(m,j))^{*}v((n,i),(m,j))\bigr) =
    \pi_{(i,(\tau,x))} \bigl(p(m,j)\bigr)
  \end{equation}
  for all $(\tau,x)\in F_{(n,i)(m,j)}$.

  Note that the situation is symmetric as $\pi_{(j,(\tau,x))}$ is
  equivalent to $\pi_{(i,(\tau,x))}$ if $x\in U_{ij}$.

  If $(\tau,x)\in F_{(n,i)(m,j)(l,k)}$, then
  \begin{equation}
    \label{eq:25}
    \phi_{(n,i)(m,j)}(\tau,x)=\phi_{(m,j),(l,k)}(\tau,x)=
    \phi_{(n,i),(l,k)}(\tau,x)=1.   
  \end{equation}
  Thus{\allowdisplaybreaks
    \begin{align}
      \label{eq:26}
      \bigl[v((n,i),
      &(m,j))v((m,j),(l,k))\bigr]_{rs}(\tau,x) \\
      &= \sum_{a} \overline{\nucp_{rsa}(\tau,x)}
        v((n,i),(m,j))_{ra}(\tau,x) v ((m,j),(l,k))_{as}(\tau,x) \\
      &=
        \begin{cases}
          0&\text{if $r\not=i$ or $s\not=l$, and } \\
          \overline{\nucp_{ijk}(\tau,x)}\phi_{(n,i)(m,j)}(\tau,x)
          \phi_{(m,j)(l,k)}(x,\tau) & \text{if $r=i$ and $s=l$.}
        \end{cases} \\
      &=
        \begin{cases}
          0&\text{if $r\not=i$ or $s\not=l$, and } \\
          \overline{\nucp_{ijk}(\tau,x)}\phi_{(n,i)(l,k)}(\tau,x) &
          \text{if $r=i$ and $s=l$.}
        \end{cases}\\
      &= \overline{\nucp_{ijk}(\tau,x)} v((n,i),(l,k))_{rs}(\tau,x).\\
      &=\overline{\tilde\nucp_{(n,i)(m,j)(l,k)}(\tau,x)}
        v((n,i),(l,k))(\tau,x).
    \end{align}
    This completes the proof.}
\end{proof}
With these modifications in place, we can prove
Theorem~\ref{thm-main-dd-calc}.
\begin{proof}[Proof of Theorem~\ref{thm-main-dd-calc}]
  By Lemma~\ref{lem-prop-5.50+}, it suffices to produce an isomorphism
  $\Phi:\cs(\Sigma_{c})\to A(\nucp)$ that intertwines each
  $\indsu((i,x),\tau)$ with $\pi_{(i, (\tau,x))}$. We use the Fourier
  Transform.  If $f\in C_{c}(\Sigma_{c})$, then we define
  \begin{equation}
    \Phi(f)(i,(\tau,x),j)=\int_{G} \tau(g) f\bigl(g,(i,x,j)\bigr)
    \,d\mu(g) .
  \end{equation}
  To see that $\Phi(f)\in A(\nucp)$, first suppose that there exist
  $\phi\in C_{c}(G)$ and $h\in C_{c}(\gu)$ such that
  $f\bigl( g,(i,x,j)\bigr)=\phi(g)h(i,x,j)$. Then
  \begin{equation}
    \Phi(f)(i,(\tau,x),j)=\hat\phi(\tau)h(i,x,j),
  \end{equation}
  and $\Phi(f)\in A(\nucp)$ because $\hat \phi\in C_{0}(\hG)$. Since
  finite sums of such functions are dense in the inductive limit
  topology, we deduce that $\Phi(f)\in A(\nucp)$ for all $f$.

  Note
  \begin{align}
    \label{eq:43}
    \Phi(f^{*}(i,(\tau,x),j))
    &= \int_{G}\tau(g)
      \overline{f(-g-c_{iji}(x),(j,x,i))}
      \,d\mu(g) \\
    &= \overline{\int_{G} \tau(g) f(g-c_{iji}(x),(j,x,i))\,d\mu(g)} \\
    &= \overline{\tau(c_{iji}(x))} \overline{\int_{G} \tau(g)
      f(g,(j,x,i))\,d\mu(g)} \\
    &=\nucp_{iji}(\tau,x) \overline{\Phi(f)(j,(\tau,x),i))}
    \\
    &=\Phi(f)^{*}(i,(\tau,x),j).
  \end{align}
  Hence $\Phi$ is $*$-preserving.
  
  To see that it is multiplicative, we use~\eqref{eq:11} at the second
  equality, and Fubini's Theorem and invariance of Haar measure at the
  third to calculate:
  \begin{align}
    \Phi(f&*f')(i,(\tau,x),j)\\
          &= \int_{G}\tau(g)f*f'\bigl(g,(i,x,j)\bigr) \,d\mu(g) \\
          &=\sum_{k}\int_{G}\int_{G} \tau(g)f\bigl(h,(i,x,k)\bigr)
            f'\bigl(g-h-c_{ikj}(x), (k,x,j)\bigr) \,d\mu(h)\,d\mu(g) \\
          &=\sum_{k}\int_{G}\int_{G} \tau(g+h+c_{ikj}(x))f\bigl(h,(i,x,k)\bigr)
            f'\bigl(g, (k,x,j)\bigr) \,d\mu(g)\,d \mu(h) \\
          &= \sum_{k}
            \tau(c_{ikj}(x))\Phi(f)\bigl(i,(\tau,x),k)\bigr) 
            \Phi(f')\bigl(k, (\tau, x), j)\bigr)\\
          &= \sum_{k} \Phi(f)\bigl(i,(\tau,x),k)\bigr)
            \Phi(f')\bigl(k, (\tau, x), j)\bigr) 
            \cnuc\bigl((i,(\tau,x),k),(k,(\tau,x),j\bigr) \\
          &=\Phi(f)*\Phi(f')\bigl(i,(\tau,x),j\bigr).
  \end{align}

  It remains to see that $\Phi$ is isometric and surjective.
  It follows from Lemma~\ref{lem-rho-reps}, that
  $\indsu((i,x),\tau)(f)$ is equivalent to multiplication by the
  matrix
  \begin{align}\SwapAboveDisplaySkip
    \label{eq:44}
    \bigl[ \tau(c_{ijk})(x)\Phi(f) \bigr].
  \end{align}
  Since $\tau(c_{ijk}(x))=\overline{\nucp_{ijk}(\tau,x)}$, we see that
  \begin{align}
    \label{eq:45}
    \indsu((i,x),\tau)(f)=\pi_{(i,(\tau,x))}(\Phi(f)).
  \end{align}
  
  This shows immediately that $\Phi$ is isometric.  It also shows that
  the image $\Phi(\cs(\Sigma_{c}))$ is a rich subalgebra (in the sense
  of \cite{dix:cs-algebras}*{Definition~11.1.1}) of the
  continuous-trace \cs-algebra $A(\nucp)$.  Hence $\Phi$ is surjective
  by \cite{dix:cs-algebras}*{Proposition~11.1.6}.
\end{proof}

\section{Groupoids Associated to Local Homeomorphisms}
\label{sec:local-lomeomorphisms}

In this section we extend our results from
Section~\ref{sec:nice-class-examples} to a more general setting. Let
$\psi:Y\to X$ be a local homeomorphism and form the principal groupoid
\begin{equation}
  R(\psi)=\set{(x,y)\in Y\times Y:\psi(x)=\psi(y)}.
\end{equation}
Let $G$ be a locally compact abelian group. Given a
unit-space-preserving groupoid extension
\begin{equation}\label{eq:9}
  \begin{tikzcd}[column sep=3cm]
    G\times Y \arrow[r,"\iota", hook] \arrow[dr,shift left, bend right
    = 15] \arrow[dr,shift right, bend right = 15]&\Sigma
    \arrow[r,"\pi", two heads] \arrow[d,shift left] \arrow[d,shift
    right]&R(\psi)\arrow[dl,shift left, bend left = 15]
    \arrow[dl,shift right, bend left = 15]
    \\
    &Y&
  \end{tikzcd}
\end{equation}
such that $\iota(g,r(\sigma))\sigma = \sigma\iota(g,s(\sigma))$ for
all $g\in G$ and $\sigma \in \Sigma$, the groupoid $\Sigma$ is a
groupoid with central isotropy. As described in detail in
Appendix~\ref{sec:Extcoc}, $\Sigma$ is a $G$-twist over $R(\psi)$.  So
$\Sigma$ is a principal $G$-bundle over $R(\psi)$ with $G$ action
\begin{equation}
  g \cdot \sigma =\iota(g,r(\sigma)) \sigma=
  \sigma\iota(g,s(\sigma))=\sigma\cdot g.
\end{equation}
We endow $\Sigma$ with the Haar system $\sset{\lambda^y}$ via
\begin{equation}\label{eq:1}
  \int_{\Sigma}
  f(\sigma)\,d\lambda^y(\sigma):=\sum_{r(\sigma)=y}\int_G  f(g\cdot 
  \sigma)\,d\mu(g) =\sum_{r(\sigma)=y} \int_{G}f(\sigma\cdot
  g)\,d\mu(g). 
\end{equation}

If $\pi$ has a continuous section $\kappa:R(\psi)\to \Sigma$ (this is
equivalent to $\pi$ being trivial as a principal $G$-bundle), then
Proposition~\ref{prop:topological-section} shows that $\Sigma$ is
properly isomorphic to the extension $\gpdtwst{R(\psi)}{\varphi}$
constructed from a continuous (normalised) $G$-valued $2$-cocycle
$\varphi \in Z^{2}(R(\psi),G)$ as in Notation~\ref{not:Esigmadef}.

To proceed, we need to assume that the map $\pi$ in~\eqref{eq:9} has
local sections: that is, for each $(x,y) \in R(\psi)$, there is a
neighbourhood $U$ of $(x,y)$ on which there is a continuous map
$s : U \to \Sigma$ satisfying $\pi \circ s = \id_U$. If $G$ is a Lie
group, then this is automatic due to the Palais Slice Theorem
\cite{pal:aom61}*{\S4.1}.  But in addition, we need to guarantee that
the collection of local sections is sufficiently robust to allow us to
build an equivalent groupoid with a global section. To this end, we
assume that $X$ is \emph{locally $G$-trivial}: every open cover of $X$
has a refinement $\sset{W_{i}}$ such that each
$H^{1}(W_{ij},\uG)=\sset0$. Equivalently, all locally trivial
principle $G$-bundles over the double-overlaps $W_{ij}$ are trivial.
A special case where these assumptions automatically hold is when $G$
is a Lie group and $X$ admits good covers in the sense that every open
over of $X$ admits a refinement in which all nontrivial overlaps are
contractible. This suffices: locally trivial principle $G$-bundles
over a space $Z$ have a classifying space $BG$ so that bundle classes
are parameterized by homotopy classes $[Z,BG]$; so if $Z$ is
contractible, then all bundles over $Z$ are trivial. All
differentiable manifolds admit good covers by
\cite{bottu:diff82}*{Corollary~I.5.2}.

For our main result, we will need to verify that the Morita equivalences
we will use preserve the identification of the spectra with
$\hG\times X$ in each case.  In particular, recall that a subset
$U\subset Y=\go$ is \emph{full} if it meets every orbit: equivalently,
$\Sigma\cdot U=\go$.  It that case, $\Sigma_{U}$ is a
$(\Sigma,\Sigma(U))$-equivalence.  Then induction from
$\cs(\Sigma(U))$ to $\cs(\Sigma)$, $\ibind{\Sigma_{U}}$, induces the
Rieffel homeomorphism of $\cs(\Sigma(U))^{\wedge}$ onto
$\cs(\Sigma)^{\wedge}$.  (For the basics on induced representations in
this context, see \cite{ionwil:pams08}*{\S2}.)  Both these
\cs-algebras have spectrum identified with $\hG\times X$, and the
observation that the Rieffel homeomorphism is the identity with
respect to these identifications follows immediately from the next
lemma.
\begin{lemma}
  \label{lem-full} If $y\in U$ and $\tau\in \hG$, then
  $\ibind{\Sigma_{U}} (\indsu^{\Sigma(U)}(y,\tau))$ is equivalent to
  $\indsu^{\Sigma} (y,\tau)$.
\end{lemma}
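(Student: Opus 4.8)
The plan is to prove the equivalence by a direct "induction in stages" argument, realizing both representations concretely on completions of function spaces and exhibiting a unitary intertwiner. First I would recall the relevant structure: since $y \in U$ and $U$ is full, $\Sigma_U := \set{\sigma \in \Sigma : s(\sigma) \in U}$ is a $(\cs(\Sigma), \cs(\Sigma(U)))$-imprimitivity bimodule (as noted in the paragraph preceding the lemma), while $\Sigma(U)_y = \Sigma(U) s^{-1}(y)$ plays the role for the inclusion $\Sigma(y) \hookrightarrow \Sigma(U)$. The representation $\indsu^{\Sigma(U)}(y,\tau) = \Ind_{\Sigma(y)}^{\Sigma(U)}(\tau)$ acts on the completion of $C_c(\Sigma(U)_y)$ with respect to the pre-inner product of type \eqref{eq:suip}, and then $\ibind{\Sigma_U}$ induces this up to $\cs(\Sigma)$ by the usual balanced-tensor-product construction, acting on the completion of $C_c(\Sigma_U) \odot_{\cs(\Sigma(U))} C_c(\Sigma(U)_y)$. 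On the other hand, $\indsu^\Sigma(y,\tau) = \Ind_{\Sigma(y)}^\Sigma(\tau)$ acts on the completion of $C_c(\Sigma_y)$ with the pre-inner product \eqref{eq:suip} formed relative to the Haar system $\sset{\lambda^u}$ on $\Sigma$ and Haar measure $\mu$ on $\Sigma(y)$.

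The key step is to construct the unitary. I would define $V : C_c(\Sigma_U) \odot C_c(\Sigma(U)_y) \to C_c(\Sigma_y)$ on elementary tensors by a convolution-type formula,
\[
  V(\xi \otimes \eta)(\sigma) = \int_{\Sigma(U)_y} \xi(\sigma \rho^{-1}) \eta(\rho)\, d\beta(\rho),
\]
where $\beta$ is a Haar system on $\Sigma(U)_y \to \set y$ compatible with the measures chosen above (so that the relevant Weil/quotient-integration formula holds). The main work is then threefold: (i) check that $V$ is well defined on the balanced tensor product, i.e.\ $V(\xi \cdot a \otimes \eta) = V(\xi \otimes a \cdot \eta)$ for $a \in C_c(\Sigma(U))$, which is a change-of-variables computation using invariance of the Haar system; (ii) check that $V$ is isometric for the two pre-inner products, which reduces to a Fubini argument comparing the iterated integrals over $\Sigma_U \times \Sigma(U)_y$ with the single integral over $\Sigma_y$, again invoking the appropriate decomposition of $\lambda^u$; and (iii) check that $V$ intertwines the $\cs(\Sigma)$-actions, which is immediate from associativity of convolution since both actions are by left convolution by elements of $C_c(\Sigma)$. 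Density of the image of $V$ (hence surjectivity of its extension) follows from an approximate-identity argument in $C_c(\Sigma(U))$.

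The main obstacle I anticipate is bookkeeping with the various measures and Haar systems: one must choose the Haar measure on $\Sigma(y)$, the Haar system on $\Sigma(U)_y$, and the Haar system $\lambda^\bullet$ on $\Sigma$ so that the quotient-integration formulas needed in steps (i) and (ii) hold simultaneously with the normalizations implicit in \eqref{eq:suip}. Since central isotropy lets us identify $\Sigma(y)$ canonically with all nearby isotropy groups and $\Sigma(U)_y$ is (groupoid-)equivalent to $\Sigma(y)$, these choices can be made coherently, but verifying the constants match is the delicate point. An alternative, and perhaps cleaner, route would be to invoke the standard "inducing in stages" theorem for groupoid crossed products (as in Muhly--Williams or the Rieffel machinery referenced via \cite{ionwil:pams08}), together with the observation that the composition of the imprimitivity bimodules $\Sigma_U$ and $\Sigma(U)_y$ is isomorphic to the bimodule $\Sigma_y$ implementing $\Ind_{\Sigma(y)}^\Sigma$; this reduces the lemma to a link-bimodule identification, sidestepping most of the measure-theoretic computation.
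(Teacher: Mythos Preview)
Your proposal is correct and is essentially the paper's approach: the paper defines the same convolution-type map (their $W(f\otimes k)(\sigma)=\int_{\Sigma(U)} f(\sigma\eta)k(\eta^{-1})\,d\lambda_{U}^{y}(\eta)$, which agrees with your $V$ after the substitution $\rho=\eta^{-1}$), computes the induced inner product directly and shows it factors through $W$, and checks intertwining. The one streamlining you can borrow is that the paper does not prove surjectivity via an approximate-identity argument; since both $\indsu^{\Sigma(U)}(y,\tau)$ and $\indsu^{\Sigma}(y,\tau)$ are irreducible, the isometric intertwiner $W$ is automatically unitary, which also dissolves most of your measure-normalization worries.
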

\begin{proof}
  As in \cite{mrw:jot87}*{p.~12} or
  \cite{simwil:jot11}*{Equation~1.3}, the
  $C_{c}(\Sigma({U}))$-valued inner product on $C_{c}(\Sigma_{U})$
  is given by
  \begin{align}
    \label{eq:27}
    \rip \Sigma(U)<f_{1},f_{2}>(\gamma)=\int_{\Sigma}
    \overline{f_{1}(\sigma^{-1})} f_{2}(\sigma^{-1}\gamma)
    \,d\lambda^{r(\gamma)}(\sigma). 
  \end{align}
  Then $\ibind{\Sigma_{U}} \indsu^{\Sigma(U)}(y,\tau)$ acts by
  convolution on the completion of
  $C_{c}(\Sigma_{U})\atensor C_{c}(\Sigma(U)_{y})$ with respect to the
  inner product
  \begin{align}
    \label{eq:28}
    \bigl(f_{1}\tensor k_{1}
    & \mid f_{2}\tensor k_{2}\bigr)
      = \bip(\rip \Sigma(U)<f_{2},f_{1}>*k_{1}|k_{2}) \\
    \intertext{which, by \eqref{eq:suip}, is}
    &=\int_{G}\int_{\Sigma(U)} \overline{k_{2}(\sigma^{-1})} \rip
      \Sigma(U)<f_{2} ,f_{1}>*k_{1}(\sigma^{-1}\cdot g) \tau(g)
      \,d\lambda_{U}^{y} (\sigma)\,d\mu(g) \\
    &=\int_{G}\int_{\Sigma(U)}\int_{\Sigma(U)}
      \overline{k_{2}(\sigma^{-1})} \rip \Sigma(U)<f_{2},f_{1}>(\eta)
      k_{1}(\eta^{-1}\sigma^{-1}\cdot g) \tau(g) \\
    &\hskip2in
      \,d\lambda_{U}^{s(\sigma)}(\eta)\tau(g) 
      \,d\lambda_{U}^{y}(\sigma)\,d\mu(g) \\
    \intertext{which, after sending $\eta\mapsto \sigma^{-1}\eta$, is}
    &=\int_{G}\int_{\Sigma(U)}\int_{\Sigma(U)}
      \overline{k_{2}(\sigma^{-1})} \rip \Sigma(U)<f_{2},f_{1}>(\sigma^{-1}\eta)
      k_{1}(\eta^{-1}\cdot g) \tau(g) \\
    &\hskip2in
      \,d\lambda_{U}^{y}(\eta)\tau(g) 
      \,d\lambda_{U}^{y}(\sigma)\,d\mu(g) \\
    &=\int_{G}\int_{\Sigma(U)}\int_{\Sigma(U)}\int_{\Sigma}
      \overline{f_{2}(\gamma^{-1}) k_{2}(\sigma^{-1})}
      f_{1}(\gamma^{-1}\sigma^{-1}\eta) k_{1}(\eta^{-1}\cdot g) \tau(g) \\
    &\hskip2in
      \,d\lambda^{s(\sigma)}(\gamma)
      \,d\lambda_{U}^{y}(\eta)\,d\lambda_{U}^{y}(\sigma) \,d\mu(g) \\
    \intertext{which, after $\gamma\mapsto \sigma^{-1}\gamma$, is}
    &=\int_{G}\int_{\Sigma(U)}\int_{\Sigma(U)}\int_{\Sigma}
      \overline{f_{2}(\gamma^{-1}\sigma) k_{2}(\sigma^{-1})}
      f_{1}(\gamma^{-1}\eta) k_{1}(\eta^{-1}\cdot g) \tau(g) \\
    &\hskip2in
      \,d\lambda^{y}(\gamma)
      \,d\lambda_{U}^{y}(\eta)\,d\lambda_{U}^{y}(\sigma) \,d\mu(g)  \\
    &= \int_{G}\int_{\Sigma} \overline{W(f_{2}\tensor
      k_{2})(\sigma^{-1})}W(f_{1}\tensor k_{1})(\sigma^{-1}\cdot g)
      \,d\lambda^{y}(\sigma) \tau(g) \,d\mu(g) ,
  \end{align}
  where
  \begin{equation}
    \label{eq:29}
    W(f\tensor k)(\sigma):=\int_{\Sigma(U)}
    f(\sigma\eta)k(\eta^{-1})\,d\lambda_{U}^{y} (\eta).
  \end{equation}
  It follows that $W$ defines an isometry from the space of
  $\ibind{\Sigma_{U}}(\indsu^{\Sigma(U)}(y,\tau)$ into the space of
  $\indsu^{\Sigma}(y,\tau)$ that intertwines the two representations.
  Since the representations are irreducible, $W$ must be a
  unitary and the representations must be equivalent.
\end{proof}

We also will need to examine the case of blowing up the unit space
$\go$ with respect to a locally finite cover $\U=\sset{U_{i}}$.  This
gives us the equivalent groupoid
\begin{equation}
  \label{eq:13}
  \Sp=\set{(i,\sigma,j):\text{$\sigma\in\Sigma$, $r(\sigma)\in U_{i}$
      and $s(\sigma)\in U_{j}$,}}
\end{equation}
where the $(\Sigma,\Sp)$-equivalence is given by
\begin{equation}
  \label{eq:30}
  Z:=\coprod \Sigma_{U_{i}}=\set{(i,\sigma):\text{$\sigma\in\Sigma$
      and $s(\sigma)\in U_{i}$.}}
\end{equation}
We endow $\Sp$ with a Haar system $\lambdap=\sset{\lambdap^{(i,x)}} $
just as in \eqref{eq:1}. 
As above, every irreducible representation of $\cs(\Sp)$ is equivalent
to one of the form $\indsu^{\Sp}((i,y),\tau)$ for $y\in\go$ and
$\tau\in\hG$.  As before, we want the Rieffel homeomorphism induced by
$\ibind Z$ to preserve the identification of these spectra with
$\hG\times X$.  This is verified in the next lemma which is analogous
to Lemma~\ref{lem-full}.
\begin{lemma}
  \label{lem-blow-up} With $\U$ and $\Sp$ as above, we have
  $\ibind Z (\indsu^{\Sp}((i,y),\tau)$ equivalent to
  $\indsu^{\Sigma}(y,\tau)$ for all $y\in U_{i}$ and $\tau\in\hG$.
\end{lemma}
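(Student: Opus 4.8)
The plan is to follow the proof of Lemma~\ref{lem-full} essentially verbatim, using the $(\Sigma,\Sp)$-equivalence $Z=\coprod\Sigma_{U_{i}}$ of~\eqref{eq:30} in place of $\Sigma_{U}$. First I would nail down the bookkeeping. The stability group of $\Sp$ at $(i,y)$ is $\set{(i,\sigma,i):\sigma\in\Sigma(y)}$, which we identify with $\Sigma(y)$; so $\indsu^{\Sp}((i,y),\tau)$ genuinely is induced from a character of a stability group, and by Remark~\ref{rem-rho-reps} both it and $\indsu^{\Sigma}(y,\tau)$ are irreducible. Then $\ibind Z\bigl(\indsu^{\Sp}((i,y),\tau)\bigr)$ acts by convolution on the completion of $C_{c}(Z)\atensor C_{c}(\Sp_{(i,y)})$, with inner product built from the $C_{c}(\Sp)$-valued inner product on $C_{c}(Z)$ (of the shape~\eqref{eq:27}, integrating over $\Sigma$ against the Haar system $\lambda$) and the inner product~\eqref{eq:suip} for $\indsu^{\Sp}((i,y),\tau)$, exactly as in~\eqref{eq:28}; while $\indsu^{\Sigma}(y,\tau)$ acts on the completion of $C_{c}(\Sigma_{y})$ via~\eqref{eq:suip}.

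Next I would write down the candidate intertwiner $W\colon C_{c}(Z)\atensor C_{c}(\Sp_{(i,y)})\to C_{c}(\Sigma_{y})$ by the composition formula analogous to~\eqref{eq:29}:
\begin{equation}
  W(f\atensor k)(\sigma)=\sum_{l}\int_{\set{\eta\in\Sigma^{y}:s(\eta)\in U_{l}}}
  f\bigl(l,\sigma\eta\bigr)\,k\bigl(l,\eta^{-1},i\bigr)\,d\lambda^{y}(\eta)
  \qquad(\sigma\in\Sigma_{y}),
\end{equation}
which makes sense because $(l,\sigma\eta)\in Z$ and $(l,\eta^{-1},i)\in\Sp_{(i,y)}$. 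The heart of the proof is then to expand $\bigl(f_{1}\atensor k_{1}\mid f_{2}\atensor k_{2}\bigr)$ and, by exactly the sequence of steps in Lemma~\ref{lem-full} — unfolding the iterated integrals, using Fubini, reindexing by invariance of $\lambda$, and collapsing composable pairs in $\Sigma$ — rewrite it as the inner product~\eqref{eq:suip} of $W(f_{1}\atensor k_{1})$ and $W(f_{2}\atensor k_{2})$ for $\indsu^{\Sigma}(y,\tau)$. Along the way one checks that $W$ takes values in $C_{c}(\Sigma_{y})$ and is a left $C_{c}(\Sigma)$-module map; the latter is immediate from the formula for $W$ and the definition of convolution, so $W$ intertwines the two representations. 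Hence $W$ is an isometric intertwiner, and since $\indsu^{\Sigma}(y,\tau)$ is irreducible, $W$ must be unitary, giving the asserted equivalence.

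The only genuine difficulty is administrative: carrying the two disjoint-union structures — that of $Z$ and that of $\Sp_{(i,y)}$ — through the iterated integrals, and matching up the Haar data $\lambdap$, $\lambda$ and $\mu$ that appear. There is a cleaner but morally identical route that avoids writing the integrals out: $Z\otimes_{\cs(\Sp)}\Sp_{(i,y)}$ is the composite imprimitivity bimodule realizing induction from $\Sp((i,y))\cong\Sigma(y)$ to $\Sigma$, so it suffices to exhibit an isomorphism of $(\Sigma,\Sigma(y))$-equivalences $Z\otimes_{\cs(\Sp)}\Sp_{(i,y)}\cong\Sigma_{y}$, which is induced by $\bigl((l,\sigma),(l,\eta,i)\bigr)\mapsto\sigma\eta$ on representatives. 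This is the same computation dressed differently, and I would present whichever version turns out shorter.
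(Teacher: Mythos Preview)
Your proposal is correct and follows essentially the same route as the paper: the paper writes out the $C_{c}(\Sp)$-valued inner product on $C_{c}(Z)$, expands $\bigl(f_{1}\tensor k_{1}\mid f_{2}\tensor k_{2}\bigr)$ via~\eqref{eq:suip}, applies left-invariance of the Haar systems, and collapses the result to the $\indsu^{\Sigma}(y,\tau)$ inner product of $W(f_{1}\tensor k_{1})$ and $W(f_{2}\tensor k_{2})$, where $W$ is given (in the paper's notation) by $W(f\tensor k)(\gamma)=\int_{\Sp}f\bigl((i,\gamma)\cdot(i,\sigma,j)\bigr)k(j,\sigma^{-1},i)\,d\lambdap^{(i,y)}(i,\sigma,j)$---which unwinds to exactly your formula. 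Your alternative composite-bimodule argument is not in the paper, but as you say it is the same computation in different clothing.
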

\begin{proof}
  The $C_{c}(\Sp)$-valued inner product on $C_{c}(Z)$ is given by
  \begin{align}
    \label{eq:31}
    \rip\Sp<f_{1},f_{2}>(i,\gamma,j)=\int_{\Sigma}
    \overline{f_{1}(i,\sigma^{-1})}
    f_{2}(j,\sigma^{-1}\gamma)\,d\lambda^{r(\gamma)} (\sigma).
  \end{align}
{\allowdisplaybreaks Thus $\ibind Z
  (\indsu^{\Sp}((i,y),\tau)$ acts by convolution on the completion of
  $C_{c}(Z)\atensor C_{c}(\Sp_{(i,y)})$ with respect to the inner product
  \begin{align}
    \label{eq:32}
    \bigl(f_{1}\tensor k_{1}
    & \mid f_{2}\tensor k_{2}\bigr)
                               = \bip(\rip\Sp<f_{2},f_{1}>*k_{1}|
      k_{2})
      \intertext{which, in view of \eqref{eq:suip}, is}
      &= \int_{G}\int_{\Sp} \overline{k_{2}(j,\sigma^{-1},j)}
        \rip\Sp<f_{2},f_{1}>*k_{1}(j,\sigma^{-1}\cdot g,i) \tau(g)
        \,d\lambdap^{(i,y)} (i,\sigma,j) \,d\mu(g) \\
    &= \int_{G}\int_{\Sp}\int_{\Sp} \overline{k_{2}(j,\sigma^{-1},j)}
      \rip\Sp<f_{2}, f_{1}>(j,\eta,l)
      k_{1}(l,\eta^{-1}\sigma^{-1}\cdot g,i)
      \tau(g) \\
    &\hskip2in \,d\lambdap^{(j,s(\sigma)}(j,\eta,l)
      \,d\lambdap^{(i,y)}(i,\sigma,j) \,d\mu(g) \\
    \intertext{which, after invoking left-invariance, is}
    &\int_{G}\int_{\Sp}\int_{\Sp}  \overline{k_{2}(j,\sigma^{-1},j)}
      \rip\Sp<f_{2},f_{1}>(j,\sigma^{-1}\eta,l) k_{1}(l,\eta^{-1}\cdot
      g,i) \tau(g)\\
    &\hskip2in \,d\lambdap^{(i,y)}(i,\eta,l)
      \,d\lambdap^{(i,y)}(i,\sigma,j) \,d\mu(g) \\
    &= \int_{G}\int_{\Sp}\int_{\Sp}\int_{\Sigma}
      \overline{k_{2}(j,\sigma^{-1},j)}
      \overline{f_{2}(j,\gamma^{-1})}
      f_{1}(l,\gamma^{-1}\sigma^{-1}\eta) k_{1}(l,\eta^{-1}\cdot g,i)
      \tau(g) \\
    &\hskip2in \,d\lambda^{s(\sigma)}(\gamma)
      \,d\lambdap^{(i,y)}(i,\eta,l) \,d\lambdap^{(i,y)}(i,\sigma,j)
      \,d\mu(g) \\
    &= \int_{G}\int_{\Sp}\int_{\Sp}\int_{\Sigma}
      \overline{k_{2}(j,\sigma^{-1},j)}
      \overline{f_{2}(j,\gamma^{-1}\sigma)} f_{1}(l,\gamma^{-1}\eta)
      k_{1}(l,\eta^{-1} \cdot g,i)\tau(g) \\
    &\hskip2in \,d\lambda^{y}(\gamma)
      \,d\lambdap^{(i,y)}(i,\eta,l) \,d\lambdap^{(i,y)}(i,\sigma,j)
      \,d\mu(g) \\
    &=\int_{G}\int_{\Sigma}\overline{W(f_{2}\tensor
      k_{2})(\gamma^{-1})} W(f_{1}\tensor k_{1})(\gamma^{-1}\cdot g)
      \,d\lambda^{y}(\gamma)  \tau(g)  \,d\mu(g),
  \end{align}
  where}
\begin{equation}
  \label{eq:33}
  W(f\tensor k)(\gamma) =\int_{\Sp}f\bigl((i,\gamma)\cdot
  (i,\sigma,j)\bigr) k(j,\sigma^{-1},i)\,d\lambdap^{(i,y)}(i,\sigma,j).
\end{equation}
As in the proof of Lemma~\ref{lem-full}, $W$ extends  to an
intertwining 
unitary implementing the desired equivalence.
\end{proof}

\begin{thm}
  \label{thm-dd-class-local-homeo} Let $Y$ and $X$ be second-countable
  locally compact Hausdorff spaces with $X$ locally $G$-trivial as
  defined above.  Suppose that $\psi:Y\to X$ is a local homeomorphism,
  and let $\Sigma$ be  a groupoid extension as in
  \eqref{eq:9}.  Then $\cs(\Sigma)$ has continuous trace with
  spectrum identified with $\hG\times X$ via
  $(\tau,x)\mapsto \indsu((i,y),\tau)$ for any $y\in\Phi^{-1}(x)$.
  Furthermore, there is a locally finite open covering
  $\mathscr W=\sset{W_{j}}_{j\in J}$ of $X$ and a cocycle
  $c \in Z^{2}(\mathscr W,\uG)$ (given in \eqref{eq:12} below) such
  that the Dixmier--Douady invariant of $C^*(\Sigma)$ is given by the
  image of $m_{*}([c])$ in $H^{3}(\hG\times X,\Z)$.
\end{thm}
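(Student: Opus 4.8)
The plan is to reduce, by restricting to a full open subset and then blowing up, to the situation of Theorem~\ref{thm-main-dd-calc}. First I would use that $\psi$ is a local homeomorphism and that $X$ is locally $G$-trivial — together with paracompactness and local compactness of $X$ — to choose a locally finite cover $\mathscr W=\sset{W_{j}}_{j\in J}$ of $X$ by precompact open sets such that each $W_{j}$ lies in $\psi(V_{j})$ for some open $V_{j}\subseteq Y$ on which $\psi$ restricts to a homeomorphism onto an open subset of $X$, and such that $H^{1}(W_{jk},\uG)=\sset0$ for all $j,k\in J$. Writing $\sigma_{j}:=(\psi|_{V_{j}})^{-1}|_{W_{j}}\colon W_{j}\to Y$ for the resulting continuous local section of $\psi$, I set $U:=\bigcup_{j}\sigma_{j}(W_{j})$; this is open in $Y$, has $\psi(U)=X$, and is \emph{full} for $R(\psi)$, since for any $y\in Y$ we have $\psi(y)\in W_{j}$ for some $j$ and then $\sigma_{j}(\psi(y))\in U$ lies in the $R(\psi)$-orbit of $y$. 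Thus $\Sigma(U)$ is again an extension of the form \eqref{eq:9}, now over $R(\psi|_{U})$, and by Lemma~\ref{lem-full} the Morita equivalence $\ibind{\Sigma_{U}}$ carries $\indsu^{\Sigma(U)}(y,\tau)$ to $\indsu^{\Sigma}(y,\tau)$, so it preserves the parametrization of the spectrum by $\hG\times X$.

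Next I would blow up $\Sigma(U)$ along the (locally finite) cover $\sset{\sigma_{j}(W_{j})}_{j}$ of $U$, obtaining the equivalent groupoid $\Sigma'$ as in \eqref{eq:13}; by Lemma~\ref{lem-blow-up} this Morita equivalence again preserves the identification of the spectrum with $\hG\times X$. Because $\psi\circ\sigma_{j}=\id_{W_{j}}$, the triple $\bigl(j,(\sigma_{j}(z),\sigma_{k}(z)),k\bigr)$ is the unique element of the blown-up relation $R'$ of $R(\psi|_{U})$ lying in the $(j,k)$-cell over a given $z\in W_{jk}$; hence $\bigl(j,(\sigma_{j}(z),\sigma_{k}(z)),k\bigr)\mapsto(j,z,k)$ identifies $R'$, as a topological groupoid, with the Raeburn--Taylor groupoid $\Gamma_{\mathscr W}$ of the cover $\mathscr W$ of $X$, and $\Sigma'$ becomes a groupoid extension of $\Gamma_{\mathscr W}$ by $G$ of the form \eqref{eq:9}.

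It then remains to produce a continuous global section of $\pi'\colon\Sigma'\to\Gamma_{\mathscr W}$. Since $\pi\colon\Sigma\to R(\psi)$ has local sections, so does $\pi'$, so $\pi'$ is a locally trivial principal $G$-bundle; restricted to the $(j,k)$-cell $W_{jk}$ of $\Gamma_{\mathscr W}$ it is a locally trivial principal $G$-bundle over $W_{jk}$, which is trivial because $H^{1}(W_{jk},\uG)=\sset0$. As $\Gamma_{\mathscr W}$ is the disjoint union of its cells, the trivializing sections assemble to a continuous section $\kappa\colon\Gamma_{\mathscr W}\to\Sigma'$, which we may take to be the identity on units. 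By Proposition~\ref{prop:topological-section}, $\Sigma'$ is then properly isomorphic to the extension $\gpdtwst{\Gamma_{\mathscr W}}{\varphi}$ of Notation~\ref{not:Esigmadef}, where $\varphi\in Z^{2}(\Gamma_{\mathscr W},G)$ is the continuous normalised cocycle with $\kappa(\alpha)\kappa(\beta)=\iota\bigl(\varphi(\alpha,\beta),r(\alpha\beta)\bigr)\kappa(\alpha\beta)$. The formula $c_{jkl}(z):=\varphi\bigl((j,z,k),(k,z,l)\bigr)$ (for $z\in W_{jkl}$) then defines a normalised cocycle $c\in Z^{2}(\mathscr W,\uG)$ — its \v{C}ech cocycle identity being exactly the groupoid cocycle identity for $\varphi$ — and $\varphi$ is precisely the cocycle $\varphi_{c}$ of the remark following Lemma~\ref{lem-rho-reps}, so $\Sigma'\cong\gpdtwst{\Gamma_{\mathscr W}}{\varphi_{c}}=\Sigma_{c}$. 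Now Theorem~\ref{thm-main-dd-calc} applies to $\Sigma_{c}$: $\cs(\Sigma_{c})$ has continuous trace with spectrum $\hG\times X$ via $(\tau,x)\mapsto[\indsu((j,x),\tau)]$ and Dixmier--Douady class the image of $m_{*}([c])$ in $H^{3}(\hG\times X,\Z)$. Composing the isomorphism $\Sigma'\cong\Sigma_{c}$ with the two Morita equivalences above, and noting that the classes $[\indsu^{\Sigma}(y,\tau)]$ for $y\in\psi^{-1}(x)$ all coincide by central isotropy and Remark~\ref{rem-rho-reps}, one concludes that $\cs(\Sigma)$ has continuous trace with spectrum $\hG\times X$ as stated; and since Morita equivalence transports the Dixmier--Douady class along the induced (here, the identity) homeomorphism of spectra, $\delta(\cs(\Sigma))$ is the image of $m_{*}([c])$ in $H^{3}(\hG\times X,\Z)$.

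I expect the main obstacle to be the very first step — producing a cover $\mathscr W$ of $X$ that is simultaneously locally finite, by precompact open sets, with each member contained in the image of a sheet of $\psi$, and with $H^{1}$ of every double overlap trivial — and then arranging the reduction so that the blown-up equivalence relation is literally a Raeburn--Taylor groupoid over $X$. The subtlety is that $\psi$ may be infinite-to-one, so a locally finite cover of $Y$ need not project to a locally finite cover of $X$; this is exactly why one must detour through a full open subset $U$ of $Y$, which in general carries no global section of $\psi$, before blowing up. A secondary point requiring care — and the reason Lemmas~\ref{lem-full} and~\ref{lem-blow-up} were proved — is checking that the entire chain of equivalences is compatible with the $\hG\times X$-parametrizations of the spectra.
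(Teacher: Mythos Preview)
Your proposal is correct and follows essentially the same route as the paper: restrict to the full open subset $Y'=\bigcup_j \sigma_j(W_j)$ (the paper's $\bigcup V_j$), blow up along $\{\sigma_j(W_j)\}$, identify the blown-up relation with $\Gamma_{\mathscr W}$, use $H^{1}(W_{jk},\uG)=0$ to trivialize the bundle over each cell and assemble a global section, invoke Proposition~\ref{prop:topological-section} to obtain $\varphi$ and hence $c$ via~\eqref{eq:12}, and finish with Theorem~\ref{thm-main-dd-calc} together with Lemmas~\ref{lem-full} and~\ref{lem-blow-up}. The only cosmetic difference is that the paper first produces local sections $\kappa_{ij}$ on $R(\psi)\cap(V_i\times V_j)$ and then transports them to $\Sigma'$, whereas you first identify $R'\cong\Gamma_{\mathscr W}$ and then trivialize over each cell; your version is also slightly more explicit in requiring the $W_j$ to be precompact, which Theorem~\ref{thm-main-dd-calc} needs.
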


\begin{proof}
  Let $\sset{U_{i}}$ be a family of open subsets of $Y$ such that each
  $\psi|_{U_{i}}$ is injective, and the sets $\sset{\psi(U_{i})}$
  cover $X$. (For example, any cover $\{U_i\}$ of $Y$ by sets on which
  $\psi$ is injective.) Since $X$ is locally $G$-trivial, there is a
  locally finite refinement $\mathscr W=\sset{W_{j}}_{j\in J}$ of the
  cover $\{\psi(U_i)\}$ of $X$ such that $H^{1}(W_{ij},\uG)=\sset0$
  for all $i$ and $j$. Fix $r:J\to I$ such that each
  $W_{j}\subset \psi(U_{r(j)})$. For each $j$, let
  $V_{j}= \psi^{-1}(W_{j})\cap U_{r(j)}$ so that $\phi$ restricts to a
  homeomorphism of $V_{j}$ onto $W_{j}$.  Let
  $Y'=\bigcup V_{j} \subseteq Y$. Then $Y'$ is open and meets every
  orbit in $Y$. Hence $\ssyp$ is equivalent to $\Sigma$  and we can apply
  Lemma~\ref{lem-full}.  (The $\Sigma$-orbits and $R(\psi)$-orbits
  coincide on
  $Y$.) 
  We can blow up $\ssyp$ with respect to the cover $\sset{V_{j}}$ to
    get an equivalent groupoid
    \begin{equation}
      \label{eq:34}
      \Sigma' =
      \set{(i,\sigma,j):\text{$\sigma \in \Sigma, r(\sigma)\in V_{i}$
          and $s(\sigma)\in V_{j}$}}
    \end{equation}
     and then apply Lemma~\ref{lem-blow-up}.  If we let
    \begin{equation}
      \label{eq:35}
      R' = \set{(i,(x,y),j):\text{$\psi(x)=\psi(y)$, $x\in V_{i}$ and
          $y\in V_{j}$}},
    \end{equation}
    then we obtain a generalised twist
    \begin{equation}
      \begin{tikzcd}[column sep=3cm]
        G \times \coprod V_{j} \arrow[r,"\iota'", hook]
        \arrow[dr,shift left, bend right = 15] \arrow[dr,shift right,
        bend right = 15]&\Sigma' \arrow[r,"\pi'", two heads]
        \arrow[d,shift left] \arrow[d,shift right]&R'.\arrow[dl,shift
        left, bend left = 15] \arrow[dl,shift right, bend left = 15]
        \\
        &\coprod V_{j}&
      \end{tikzcd}
    \end{equation}

  The map $(x,y)\mapsto \psi(x)$ is a homeomorphism of
  $R(\psi)\cap (V_{i} \times V_{j})$ onto $W_{ij}$. Hence
  ${\pi^{-1}\big(R(\psi)\cap (V_{i}\times V_{j})}\big)$ is a trivial
  bundle by our assumption on the $\sset{W_{i}}$ and there is a local
  section $\kappa_{ij}$ defined on $R(\psi)\cap (V_{i} \times V_{j})$
  (we may choose $\kappa_{ii}$ to respect the identification of unit
  spaces). By definition of $R'$ and $\Sigma'$ each $\kappa_{ij}$
  determines a section
  $\kappa'_{ij} : \{i\} \times (R(\psi)\cap (V_{i} \times V_{j}))
  \times \{j\} \to \Sigma'$ satisfying
  \begin{equation}
    \kappa'_{ij}(i,(x,y),j)=(i,\kappa_{ij}(x,y),j).
  \end{equation}
  Since the domains of the $\kappa'_{ij}$ are topologically disjoint
  in $R'$, these $\kappa'_{ij}$ assemble into a global section
  $\kappa' : R' \to \Sigma'$ for $\pi'$.  Hence there is a continuous
  $G$-valued normalised cocycle $\varphi \in Z^{2}(R',G)$ such that
  $\Sigma'\cong \gpdtwst{R'}{\varphi}$ (see
  Proposition~\ref{prop:topological-section} for details). There is a
  groupoid homomorphism $\tau:R'\to \Gamma_{\mathscr W}$ such that
  \begin{equation}
    \tau(i,(x,y),j)=(i,\psi(x),j)
  \end{equation}
  and
  \begin{equation}
    \tau^{-1}(i,w,j)=(i,(x,y),j)\quad\text{if $x\in V_{i}$ and $y\in
      V_{j}$ satisfy $\psi(x) = w = \psi(y)$.}
  \end{equation}

  So, defining
  $\tilde{\varphi} := \varphi \circ (\tau^{-1} \times \tau^{-1}) \in
  Z^{2}(\Gamma_{\mathscr W},G)$, we obtain an isomorphism
  $\Sigma'\cong \gpdtwst{\Gamma_{\mathscr W}}{\tilde{\varphi}}$.  We
  define $c\in Z^{2}(\mathscr W,\uG)$ by
  \begin{equation}\label{eq:12}
    c_{ijk}(w)=\tilde \varphi((i,w,j),(j,w,k)).
  \end{equation}
  Then $\gpdtwst{\Gamma_{\mathscr W}}{\tilde{\varphi}} = \Sigma_c$.

  The isomorphism of $\Sp$ and $\Sigma_{c}$ clearly intertwines the
  two Haar systems (see Proposition~\ref{prop:topological-section}),
  and therefore the representations $\indsu^{\Sp}((i,\psi(x))$ and
  $\indsu^{\Sigma_{c}}((i,x),\tau)$.  Combining this with
  Lemmas~\ref{lem-blow-up}~and \ref{lem-full}, the Dixmier-Douady
  class of $\cs(\Sigma)$ can be identified with that of
  $\cs(\Sigma_{c})$.  The result now follows from
  Theorem~\ref{thm-main-dd-calc}.
\end{proof}

\appendix

\section{Extensions and Cocycles}
\label{sec:Extcoc}

Let $G$ be a locally compact Hausdorff abelian group and let $\Gamma$
be a locally compact Hausdorff groupoid with Haar system
$\{\lambda^{u}\}_{u\in\Gamma^{(0)}}$. Following
\citelist{\cite{kum:jot88} \cite{tu:tams06} \cite{ionkum16}}, we
define an \emph{extension} (or twist) by $G$ over $\Gamma$ to be a
central groupoid extension
$\Gamma^{(0)}\times G\xrightarrow{\iota}\Sigma\xrightarrow{\pi}\Gamma$
where $\go=\Gamma^{(0)}$, $\iota$ is a groupoid homeomorphism onto a
closed subgroupoid of $\Sigma$ such that $\iota(u,0_{G})=u$ for all
$u\in\Gamma^{(0)}$, $\pi$ is an open, surjective groupoid homomorphism
such that $\pi(u)=u$ for all $u\in\Gamma^{(0)}$, such that
$\pi^{-1}(\Gamma^{(0)})=\iota(\Gamma^{(0)}\times G)$, and such that
$\iota(r(\sigma),g)\sigma=\sigma \iota(s(\sigma),g)$ for all
$\sigma\in\Sigma$ and $g\in G$. We summarize all of this by drawing
the diagram
\[
  \begin{tikzcd}[column sep=3cm]
    \Gamma^{(0)} \times G \arrow[r,"\iota", hook] \arrow[dr,shift
    left, bend right = 15] \arrow[dr,shift right, bend right =
    15]&\Sigma \arrow[r,"\pi", two heads] \arrow[d,shift left]
    \arrow[d,shift right]&\Gamma \arrow[dl,shift left, bend left = 15]
    \arrow[dl,shift right, bend left = 15]
    \\
    &\go.&
  \end{tikzcd}
\]
Two twists by $G$ are \emph{properly isomorphic} if there is a
groupoid isomorphism between them which preserves the inclusions of
$\Gamma^{(0)}\times G$ and intertwines the surjections onto
$\Gamma$. If
$\Gamma^{(0)}\times
G\stackrel{\iota}{\longrightarrow}\Sigma\stackrel{\pi}{\longrightarrow}\Gamma$
is an extension by $G$ over $\Gamma$, then $G$ acts freely and
properly on $\Sigma$ via $g\sigma:=\iota(r(\sigma),g)\sigma$. Hence
$\pi:\Sigma\to\Gamma$ is a principal $G$-bundle. Moreover,
$G\setminus\Sigma\simeq\Gamma$ and $\pi$ can be identified with the
quotient map.

\begin{remark}
  For completeness, we check that the action of $G$ on $\Sigma$ is
  proper. We need to show that the map
  $(g,\sigma)\mapsto(g\sigma,\sigma)$ is proper. Let $K$ be a compact
  subset of $\Sigma$ and let $\{(g_{n},\sigma_{n})\}_{n}$ be a
  sequence in the preimage of $K\times K$. Then
  $\{g_{n}\sigma_{n}\}_{n}\subset K$ and
  $\{\sigma_{n}\}_{n}\subset K$.  Hence there is a subsequence
  $\{\sigma_{n_{k}}\}_{k\ge1}$ such that
  $\sigma_{k_{n}}\to\sigma\in K$ and
  $g_{k_{n}}\sigma_{k_{n}}\to\sigma^{\prime}\in K$. It follows that
  $\pi(\sigma)=\pi(\sigma^{\prime}).$ Hence there exists $g\in G$ such
  that $\sigma^{\prime}=g\sigma$. Therefore
  \[
    \iota(r(\sigma_{k_{n}}),g_{k_{n}})=\iota(r(\sigma_{k_{n}}),g_{k_{n}})\sigma_{k_{n}}\sigma_{k_{n}}^{-1}\to
    g\sigma\sigma^{-1}=\iota(r(\sigma),g).
  \]
  Since $\iota$ is a homeomorphism, it follows that $g_{k_{n}}\to g$,
  so the action of $G$ on $\Sigma$ is proper.

\end{remark}
As in \cite{tu:tams06}, the Baer sum $\Sigma_{1}\ast\Sigma_{2}$ of two
extensions
$\Gamma^{(0)}\times G\longrightarrow\Sigma_{i}\longrightarrow\Gamma$
is the extension
$\Gamma^{(0)}\times G\longrightarrow\Sigma_{1}\ast
\Sigma_{2}\longrightarrow\Gamma$ with
\[
  \Sigma_{1}\ast
  \Sigma_{2}=\{(\sigma_{1},\sigma_{2})\in\Sigma_{1}\times\Sigma_{2}\:|\:\pi_{1}(\sigma_{1})=\pi_{2}(\sigma_{2})\}/\sim,
\]
where $(g\sigma_{1},\sigma_{2})\sim(\sigma_{1},g\sigma_{2})$. The map
$\pi:\Sigma\to G$ is given by
$\pi[(\sigma_{1},\sigma_{2})]=\pi_{1}(\sigma_{1})=\pi_{2}(\sigma_{2})$,
and the inclusion $\iota:\Gamma^{(0)}\times G\to\Sigma$ is
$\iota(u,g)=[(\iota_{1}(u,g),u)]=[(u,\iota_{2}(u,g))]$. The inverse of
the extension
$\Gamma^{(0)}\times G\xrightarrow{\iota}\Sigma\xrightarrow{\pi}\Gamma$
is the extension
$\Gamma^{(0)}\times
G\xrightarrow{\iota^{\prime}}\tilde{\Sigma}\xrightarrow{\pi}\Gamma$,
where $\tilde{\Sigma}=\Sigma$ as a groupoid, but
$\iota^{\prime}(g)=\iota(-g)$. The semi-direct product
$\Gamma\times G$ is called the \emph{trivial twist.} The collection
$T_{\Gamma}(G)$ of proper isomorphism classes of twists by $G$ forms
an abelian group under $*$ with neutral element $[\Gamma\times G]$.

\begin{remark}
  Let $\Sigma$ be an extension by $G$ over a principal groupoid
  $\Gamma$; that is, the map $\gamma\mapsto(r(\gamma),s(\gamma))$ is
  injective; equivalently, $\I(\Gamma) = \go$.  Then
  $\iota(\go\times G)= \I(\Sigma)$:
  $\iota(\go\times G)\subseteq \I(\Sigma)$ is clear; and if
  $\sigma\in \I(\Sigma)$ then $\pi(\sigma)\in \I(\Gamma)=\go$ and,
  hence $\sigma\in \iota(\go\times G).$ As in the case of
  $\mathbf{T}$-groupoids, if in addition $\Sigma\setminus\go$ is
  Hausdorff, then $\Sigma$ has central isotropy.
\end{remark}
A continuous 2-cocycle $\varphi:\Gamma^{(2)}\to G$ is a continuous
function such that for all
$(\gamma_{0},\gamma_{1},\gamma_{2})\in\Gamma^{(3)}$,
\[
  \varphi(\gamma_{0},\gamma_{1}) +
  \varphi(\gamma_{0}\gamma_{1},\gamma_{2}) =
  \varphi(\gamma_{1},\gamma_{2}) +
  \varphi(\gamma_{0},\gamma_{1}\gamma_{2}).
\]
A 2-cocycle $\varphi$ is \emph{normalised }provided that
$\varphi(u,\gamma)=0 = \varphi(\gamma,u)$ for all
$\gamma \in \Gamma, u \in \go$. The collection of all normalized
cocycles forms a group denoted $Z^{2}(\Gamma, G)$.  A $1$-cochain is a
continuous function $f : \Gamma \to G$.
The associated $2$-coboundary is the map
$d^1f \in Z^2(\Gamma, G)$ given by
$(d^{1}f)(\gamma_{0}, \gamma_{1}) = f(\gamma_{0}) + f(\gamma_{1}) -
f(\gamma_{0}\gamma_{1})$.

\begin{remark}
  If $\Gamma$ is \'etale, then any continuous 2-cocycle is
  cohomologous to a normalized continuous 2-cocycle. Indeed, note
  first that $\varphi(\gamma_{0},u) = \varphi(u,\gamma_{1})$ for all
  $(\gamma_{0},u,\gamma_{1})\in\Gamma^{(3)}$. Define a $1$-cochain
  $f:\Gamma\to G$ by $f(\gamma) = 0_{G}$ for all $\gamma\notin\go$ and
  $f(u) = \varphi(u,u)$ for $u\in\go$. Since $\Gamma$ is \'etale,
  $\go$ is open so $f$ is continuous. So
  $\psi(\gamma_{0},\gamma_{1}) := \varphi(\gamma_{0}, \gamma_{1}) -
  (d^{1}f)(\gamma_{0}, \gamma_{1})$ defines a continuous normalized
  $2$-cocycle cohomologous to $\varphi$.
\end{remark}

\begin{notation}\label{not:Esigmadef}
  Recall from \cite{ren:groupoid}*{Lemma I.1.14} that given a
  normalized 2-cocycle $\varphi$ on $\Gamma$, there is an extension
  $\gpdtwst{\Gamma}{\varphi}$ of $\Gamma$ by $G$ given by
  \[
    \gpdtwst{\Gamma}{\varphi} := G\times\Gamma
  \]
  with $(g,\gamma_{1})$ and $(h,\gamma_{2})$ composable if and only if
  $\gamma_{1}$ and $\gamma_{2}$ are composable, and
  $(g,\gamma_{1}) (h,\gamma_{2}) = (g + h + \varphi(\gamma_{1},
  \gamma_{2}), \gamma_{1}\gamma_{2})$. We have
  $(g,\gamma)^{-1} = (-g - \varphi(\gamma^{-1}, \gamma),
  \gamma^{-1})$. The maps
  $\iota_{\varphi} : \Gamma^{(0)} \times G \to
  \gpdtwst{\Gamma}{\varphi}$ and
  $\pi_{\varphi} : \gpdtwst{\Gamma}{\varphi} \to \Gamma$ are defined
  by $\iota_{\varphi}(u,g) = (g,u)$ and
  $\pi_{\varphi}(g, \gamma) = \gamma$.
\end{notation}

\begin{remark}
  One can prove that the proper-isomorphism class of
  $\gpdtwst{\Gamma}{\varphi}$ depends only on the cohomology class of
  $\varphi$. Indeed, if $\varphi_{2}=\varphi_{1}+d^{1}f$, then the map
  $\psi: \gpdtwst{\Gamma}{\varphi_1} \to \gpdtwst{\Gamma}{\varphi_2}$
  defined via $\psi(g,\gamma)=(g-f(\gamma),\gamma)$ is a proper
  isomorphism.
\end{remark}
The following result generalizes the discussion on pages 130--131 of
\cite{muhwil:ms92} (see also \cite{ren:groupoid}*{Lemma I.1.14}).

\begin{prop}\label{prop:topological-section}
  An extension $\Sigma$ is properly isomorphic to
  $\gpdtwst{\Gamma}{\varphi}$ for some continuous normalized
  $2$-cocycle $\varphi \in Z^{2}(\Gamma, G)$ if and only if the map
  $\pi$ admits a continuous cross section $\tau$.
\end{prop}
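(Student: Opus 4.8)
The plan is to prove the two implications separately, the substantive content lying in the direction ``continuous section $\Rightarrow$ proper isomorphism''. The easy direction is immediate: if $\Theta : \gpdtwst{\Gamma}{\varphi} \to \Sigma$ is a proper isomorphism then $\pi \circ \Theta = \pi_{\varphi}$ by definition of proper isomorphism, and since $\gpdtwst{\Gamma}{\varphi} = G \times \Gamma$ as a topological space the map $\gamma \mapsto (0_{G},\gamma)$ is a continuous section of $\pi_{\varphi}$; composing with $\Theta$ gives a continuous section $\tau := \Theta(0_{G},\cdot\,)$ of $\pi$.

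For the converse, start from a continuous $\tau : \Gamma \to \Sigma$ with $\pi \circ \tau = \id_{\Gamma}$. First I would adjust $\tau$ so that it restricts to the identity on units. For $u \in \Gamma^{(0)}$ we have $r(\tau(u)) = u$ and $\tau(u) \in \pi^{-1}(\Gamma^{(0)}) = \iota(\Gamma^{(0)} \times G)$, so $\tau(u) = \iota(u, f(u))$ for a map $f : \Gamma^{(0)} \to G$ which is continuous because $\iota$ is a homeomorphism onto its closed image; then replacing $\tau(\gamma)$ by $\iota(r(\gamma), -f(r(\gamma)))\,\tau(\gamma)$ produces a continuous section with $\tau(u) = u$ for all $u \in \Gamma^{(0)}$, and I work with this section from here on. This preliminary step is precisely what makes the cocycle below come out normalized; one cannot instead appeal to the earlier remark that a continuous $2$-cocycle is cohomologous to a normalized one, since $\Gamma$ need not be \'etale here.

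Next define $\Psi : G \times \Gamma \to \Sigma$ by $\Psi(g,\gamma) := \iota(r(\gamma), g)\,\tau(\gamma)$. The key claim is that $\Psi$ is a homeomorphism: continuity is clear from continuity of the groupoid operations, of $\iota$, and of $\tau$; $\Psi$ is injective because applying $\pi$ recovers $\gamma$ and then $\tau(\gamma)$ cancels and $\iota$ is injective; it is surjective because for any $\sigma \in \Sigma$ the element $\sigma\,\tau(\pi(\sigma))^{-1}$ lies in $\pi^{-1}(\Gamma^{(0)}) = \iota(\Gamma^{(0)} \times G)$; and $\Psi^{-1}$ is continuous because recovering the $G$-coordinate of $\sigma$ amounts to applying $\iota^{-1}$ on the closed subset $\iota(\Gamma^{(0)} \times G)$. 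Then I would transport the multiplication of $\Sigma$ across $\Psi$: for composable $\gamma_{1}, \gamma_{2}$, using the centrality relation $\iota(r(\sigma), a)\sigma = \sigma\iota(s(\sigma), a)$ with $\sigma = \tau(\gamma_{1})$ to commute $\iota(r(\gamma_{2}), h) = \iota(s(\gamma_{1}), h)$ leftward past $\tau(\gamma_{1})$, one computes
\[
  \Psi(g,\gamma_{1})\,\Psi(h,\gamma_{2}) = \iota\bigl(r(\gamma_{1}),\, g+h\bigr)\,\tau(\gamma_{1})\tau(\gamma_{2}).
\]
Since $\pi\bigl(\tau(\gamma_{1})\tau(\gamma_{2})\tau(\gamma_{1}\gamma_{2})^{-1}\bigr) = r(\gamma_{1}) \in \Gamma^{(0)}$, there is a unique $\varphi(\gamma_{1},\gamma_{2}) \in G$ with $\tau(\gamma_{1})\tau(\gamma_{2}) = \iota\bigl(r(\gamma_{1}), \varphi(\gamma_{1},\gamma_{2})\bigr)\tau(\gamma_{1}\gamma_{2})$, and substituting this yields $\Psi(g,\gamma_{1})\Psi(h,\gamma_{2}) = \Psi\bigl(g + h + \varphi(\gamma_{1},\gamma_{2}),\ \gamma_{1}\gamma_{2}\bigr)$, which is precisely the product in $\gpdtwst{\Gamma}{\varphi}$.

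It then remains to tie up loose ends. The map $\varphi$ is continuous because $(\gamma_{1},\gamma_{2}) \mapsto \tau(\gamma_{1})\tau(\gamma_{2})\tau(\gamma_{1}\gamma_{2})^{-1}$ is a continuous map into $\iota(\Gamma^{(0)} \times G)$, on which $\iota^{-1}$ is continuous; $\varphi$ satisfies the $2$-cocycle identity because the transported product on $G \times \Gamma$ inherits associativity from $\Sigma$; and $\varphi$ is normalized, i.e. $\varphi(r(\gamma),\gamma) = 0 = \varphi(\gamma, s(\gamma))$, directly from the fact that $\tau$ restricts to the identity on units. Finally $\Psi$ is a proper isomorphism: it intertwines the surjections onto $\Gamma$ since $\pi \circ \Psi = \pi_{\varphi}$, and it preserves the inclusions of $\Gamma^{(0)} \times G$ since $\Psi(\iota_{\varphi}(u,g)) = \Psi(g,u) = \iota(u,g)$, that is $\Psi \circ \iota_{\varphi} = \iota$. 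The main obstacle, as far as I can see, is purely bookkeeping: getting the centrality moves right in the displayed computation and carrying out the preliminary normalization of $\tau$; everything else is routine verification.
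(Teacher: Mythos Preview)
Your proof is correct and follows essentially the same approach as the paper: normalize the section so it fixes units, define $\varphi$ via $\tau(\gamma_{1})\tau(\gamma_{2})\tau(\gamma_{1}\gamma_{2})^{-1} = \iota(r(\gamma_{1}),\varphi(\gamma_{1},\gamma_{2}))$, and show that $(g,\gamma)\mapsto \iota(r(\gamma),g)\tau(\gamma)$ is a proper isomorphism. The only cosmetic differences are that the paper writes the normalization as $\gamma\mapsto \tau(r(\gamma))^{-1}\tau(\gamma)$ (which unwinds to your formula) and verifies the cocycle identity by direct computation rather than by transporting associativity, but the content is identical.
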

\begin{proof}
  Assume that $i: \gpdtwst{\Gamma}{\varphi} \to\Sigma$ is a proper
  isomorphism, where $\varphi$ is a continuous 2-cocycle. Then one can
  define a continuous cross section $\tau$ of $\pi$ by
  $\tau(\gamma)=i(0_{G},\gamma)$. Conversely, assume that
  $\tau:\Gamma\to\Sigma$ is a continuous cross section of $\pi$. By
  replacing $\tau$ with the map
  $\gamma \mapsto \tau(r(\gamma))^{-1}\tau(\gamma)$, we can assume
  without loss of generality that $\tau(u)=u$ for all $u\in\go$. Then
  $\tau(\gamma_{1})\tau(\gamma_{2})\tau(\gamma_{1}
  \gamma_{2})^{-1}\in\pi^{-1}(\Gamma^{(0)})=\iota(\Gamma^{(0)}\times 
  G)$ for all $(\gamma_{1},\gamma_{2})\in\Gamma^{(2)}$. Since $\iota$
  is a homeomorphism onto its image, there is a unique
  $(u,g)\in\Gamma^{(0)}\times G$ such that
  $\iota(u,g)=\tau(\gamma_{1})
  \tau(\gamma_{2})\tau(\gamma_{1}\gamma_{2})^{-1}$. Note 
  that $u=r(\gamma_{1})$. Define $\varphi:\Gamma^{(2)}\to G$ by
  $\iota(r(\gamma_{1}),\varphi(\gamma_{1},\gamma_{2})) =
  \tau(\gamma_{1})\tau(\gamma_{2})\tau(\gamma_{1}\gamma_{2})^{-1}$. Then
  $\varphi$ is continuous. To see that $\varphi$ is a 2-cocycle, first
  note that
  $\iota(r(\gamma_{1}), \varphi(\gamma_{1}, \gamma_{2}))
  \tau(\gamma_{1}, \gamma_{2}) = \tau(\gamma_{1}) \tau(\gamma_{2})$
  and that $\Sigma$ is an extension. So if
  $(\gamma_{0},\gamma_{1},\gamma_{2})\in\Gamma^{(3)}$, then
  \begin{align*}
    \iota((r(\gamma_{0}),\varphi(\gamma_{1},\gamma_{2} )
    +\varphi(\gamma_{0},\gamma_{1}\gamma_{2})) 
    & =\tau(\gamma_{0})\tau(\gamma_{1})
      \tau(\gamma_{2})\tau(\gamma_{0}\gamma_{1}\gamma_{2})^{-1}\\ 
    & =\iota(r(\gamma_{0}),\varphi(\gamma_{0}\gamma_{1},\gamma_{2})+
      \varphi(\gamma_{0},\gamma_{1})). 
  \end{align*}
  Hence $\varphi$ is a 2-cocycle because $\iota$ is
  injective. Moreover $\varphi$ is normalized since $\tau(u)=u$ for
  all $u\in\go$. The map $\psi : \gpdtwst{\Gamma}{\varphi} \to\Sigma$
  defined by
  $\psi(g,\gamma) := g\cdot\tau(\gamma) =
  \iota(r(\gamma),g)\tau(\gamma)$ is a homeomorphism and a groupoid
  morphism.
\end{proof}

\bibliographystyle{amsxport} \bibliography{iksw}
\end{document}